\documentclass[12pt]{amsart}
\usepackage{amssymb,amsmath,amsthm,bm}
\usepackage[colorinlistoftodos,prependcaption,textsize=tiny]{todonotes}
\oddsidemargin=-.0cm
\evensidemargin=-.0cm
\textwidth=16cm
\textheight=22cm
\topmargin=0cm
 \definecolor{darkblue}{RGB}{0,0,160}
\usepackage{fouriernc} 
\usepackage[colorlinks=true,allcolors=darkblue]{hyperref}
\DeclareSymbolFont{usualmathcal}{OMS}{cmsy}{m}{n}
\DeclareSymbolFontAlphabet{\mathcal}{usualmathcal}

\usepackage[T1]{fontenc}

\usepackage{color}

\usepackage{parskip}
\usepackage{setspace}
\DeclareGraphicsRule{.tif}{png}{.png}{`convert #1 `dirname #1`/`basename #1.tif`.png}
\usepackage{amsmath,amsthm,amscd,amssymb, mathrsfs}

\usepackage{latexsym}

\numberwithin{equation}{section}

\theoremstyle{plain}
\newtheorem{theorem}{Theorem}[section]
\newtheorem{lemma}[theorem]{Lemma}

\newtheorem{proposition}[theorem]{Proposition}
\newtheorem{conjecture}[theorem]{Conjecture}

\theoremstyle{definition}

\newtheorem{example}[theorem]{Example}

\theoremstyle{remark}

\newtheorem{case[theorem]}{Case}

\title[\parbox{14cm}{\centering{A spherical extension theorem and applications \hspace{1in}}} \quad]{A spherical extension theorem and applications in positive characteristic}
\author{Doowon Koh and Thang Pham}

\address{Department of Mathematics\\
Chungbuk National University \\
Cheongju, Chungbuk 28644 Korea}
\email{koh131@chungbuk.ac.kr}

\address{University of Science\\ Vietnam National University, Hanoi
}
\email{thangpham.math@vnu.edu.vn}

\subjclass[2010]{ 52C10, 42B05, 11T23 }

\begin{document}
\begin{abstract} 
In this paper, we prove an extension theorem for spheres of square radii in $\mathbb{F}_q^d$, which improves a result obtained by Iosevich and Koh (2010). Our main tool is a new point-hyperplane incidence bound which will be derived via a cone restriction theorem due to the authors and Lee (2022). Applications on the distance problems will be also discussed. 
 \end{abstract}

\maketitle
\section{Introduction}
Let $q$ be an odd prime power, and $\mathbb{F}_q$ be a finite field of order $q$.  Let $\mathbb{F}_q^d$ be the $d$-dimensional vector space over $\mathbb{F}_q$. We endow the space $\mathbb F_q^d$ with  counting measure $dc$. We denote the dual space of $\mathbb{F}_q^d$ by $\mathbb F_{q*}^d$, and endow it with  normalized counting measure $dn$.  
For any algebraic variety $V$ in $\mathbb F_{q*}^d$, we will endow it with the normalized surface measure $d\sigma$ which is defined by the relation $d\sigma(x)=\frac{q^d}{|V|} 1_{V}(x) ~dn(x),$ where $|V|$ denotes the cardinality of $V.$ 

Let $\chi$ be a nontrivial additive character of $\mathbb F_q.$ For any function $g: \mathbb F_q^d \to \mathbb C,$ the Fourier transform of $g$ is defined by
$$ \widehat{g}(x):=\sum_{m\in \mathbb F_q^d} g(m) \chi(-x\cdot m).$$ 

This definition should be compared with the definition of the Fourier transform $\widehat{g}$ used in other papers. 
We emphasize that there does not appear a normalizing factor $q^{-d}$ in the definition of $\widehat{g},$ while such a normalizing factor has been used in many other articles such as \cite{IR06}, \cite{hart}, \cite{CEHIK10}, and \cite{covert}.

If $f$ is a complex-value function on the dual space, namely, $f: \mathbb F_{q*}^d \to \mathbb C$,  the inverse Fourier transform of $f$ is defined by
$$ f^{\vee} (m):= \frac{1}{q^d} \sum_{x\in \mathbb F_{q*}^d} f(x) \chi(m\cdot x).$$
In addition, the inverse Fourier transform of the measure $f d\sigma$  is defined by
$$ (fd\sigma)^{\vee} (m):= \frac{1}{|V|} \sum_{x\in V} f(x) \chi(m\cdot x).$$

Since there is an isomorphism between $\mathbb F_q^d$ and  its dual space $\mathbb F_{q*}^d$, for the sake of simplicity, we will simply write $\mathbb F_q^d$ for $\mathbb F_{q*}^d$, and in this paper, the only differences between the two spaces are corresponding measures. 

Let $P$ be the paraboloid in $\mathbb{F}_q^d$ defined by the equation $x_d=x_1^2+\cdots+x_{d-1}^2.$ For $\beta\in \mathbb{F}_q$, let $P_\beta$ be a\ translate of $P$ by $\beta$ defined by $x_d+\beta=x_1^2+\cdots+x_{d-1}^2.$ For $j\ne 0$, let $S_j$ be the sphere centered at the origin of radius $j$, namely,
  $$S_j=\{(x_1,\ldots, x_d)\in \mathbb F_q^d: x_1^2+ x_2^2 + \cdots + x_d^2=j\}.$$
Notice that  the definition of a radius in finite fields is different from  that in the Euclidean case.

 In this paper, the variety $V$ will be often considered as a sphere or the paraboloid $P$.

The $L^p\to L^r$ extension problem for the variety $V$ is to determine  all ranges of $1\le p, r\le \infty$ such that the following inequality 
\begin{equation}\label{defR}||(fd\sigma)^\vee||_{L^{r}(\mathbb{F}_q^d, dc)}\le C ||f||_{L^p(V, d\sigma)}\end{equation}
holds for all functions $f$ on $V$, where the positive constant $C$ does not depend on $q$. By duality,  the extension estimate \eqref{defR} implies that  one has
\begin{equation*}\label{dfff}\|\widehat{g}\|_{L^{p'}(V, d\sigma)} \le C \|g\|_{L^{r'}(\mathbb{F}_q^d, dc)}
\end{equation*}
for all functions $g$ on $(\mathbb F_q^d, dc)$, where $1/r+1/r'=1$ and $1/p+1/p'=1$. We will use the notation $R_{V}^*(p\to r)\ll 1$ to indicate that  the inequality \eqref{defR} holds. 

In this paper, we will use the following notation: $X \ll Y$ means that there exists  some absolute constant $C_1>0$ such that $X \leq C_1Y$, 
 and  $X\sim Y$ means $Y\ll X\ll Y$.

Necessary conditions for $R_V^*(p\to r)\ll 1$ can be given in terms of the size of $V$ and  the cardinality of an affine subspace $H$ lying on $V.$  Mockenhaupt and Tao  \cite{MT04}  indicated that if $V\subset \mathbb F_{q}^d$ with $|V|\sim q^{d-1}$ and $V$ contains an affine subspace $H$ with $|H|=q^k,$ then one has
\begin{equation*}
r\geq \frac{2d}{d-1}  \quad \mbox{and} \quad r\geq\frac{p(d-k)}{(p-1)(d-1-k)}.\end{equation*}
In recent years, there has been intensive progress in studying $L^2\to L^r$ and $L^p\to L^4$ extension estimates for spheres. More precisely, for the case of $R_{S_j}^*(2\to r)$, it is believed that 
\begin{equation}\label{ConjOdd}R_{S_j}^*(2\to r)\ll 1 \iff  r\ge \frac{2d+2}{d-1}\quad\mbox{for odd dimensions} ~~d\ge 3,\end{equation}
and
\begin{equation}\label{ConjEven} R_{S_j}^*(2\to r)\ll 1 \iff  r\ge  \frac{2d+4}{d}\quad\mbox{for even dimensions}~~ d\ge 2.\end{equation}

These conjectures can be derived by testing the inequality \eqref{defR} with $f=1_{H}$  and $f=1_{S_j}$, where $H$ denotes a maximal affine subspace lying in $S_j$. Moreover, it follows from \cite[Lemma 1.13]{KPV18} that $|H|=q^{\frac{d-1}{2}}$ for odd $d\ge 3,$ and $|H|=q^{\frac{d-2}{2}}$ for even $d\ge 2$ 

For all $d\ge 2$, Iosevich and Koh \cite{IK08} used Stein-Tomas argument to obtain an $L^2\to L^r$ extension result, which matches the conjecture \eqref{ConjOdd}. They also solved the extension conjecture \eqref{ConjEven} in two dimensions. For the case $d\ge 4$ even, we only know that
$$ R_{S_j}^*(2\to r)\ll 1\quad\mbox{for}~~ r\ge  \frac{2d+2}{d-1},$$
which is far from the conjecture \eqref{ConjEven}.  

Compared to the case of paraboloids, the estimates (\ref{ConjOdd}) and (\ref{ConjEven}) have been proved in \cite{MT04, IKL17}. Furthermore, in the case of paraboloids, if $d=4k+3$ and $-1$ is not a square, the conjecture is stronger, namely, $R_{P}^*\bigg(2\to \frac{2d+6}{d+1}\bigg)\ll 1$. The best current estimate is $R_P^*\bigg(2\to \frac{2d+4}{d}\bigg)\ll 1$ due to the authors and Vinh \cite{KPV18}. It is known for the paraboloid case that the sharp $L^2\to L^r$ extension result can be derived by using the additive energy of a set on a paraboloid, for example, see \cite{Le13, Le19, Le20, RS19}. However, such a connection is not known for spheres.  Moreover, the spherical extension problem is harder than the paraboloid case, since the Fourier transform of non zero-radii spheres is reduced to the Kloosterman sum whose explicit form is not known yet.

If $-1$ is not a square and $d=4k+2$, Iosevich, Lee, Shen, and the authors \cite{pham} proved that the conjecture (\ref{ConjEven}) holds for the sphere of zero radius. The main difference between the zero radius and non-zero radius spheres is that we can use the Gauss sum in the place of the Kloosterman sum in the Fourier decay. In addition, the explicit form of the Gauss sum is very well-known, for instance, see \cite{LN97}. 

Similarly, the $L^p\to L^4$ extension problem for the sphere $S_j$ is to determine  all ranges of $p$ such that the following inequality 
\begin{equation*}\label{defR1}||(fd\sigma)^\vee||_{L^{4}(\mathbb{F}_q^d, dc)}\le C ||f||_{L^p(S_j, d\sigma)}\end{equation*}
holds for all functions $f$ on $S_j$, where the positive constant $C$ does not depend on $q$. 

In odd dimensional spaces, it is known in \cite[Section 2]{I-K} that the Stein-Tomas exponent toward $L^p\to L^4$ cannot be improved in general. This comes from the fact that if $q\equiv 1\mod 4$, $d$ is odd, and the sphere $S_j$ is of non-zero square radius, then $S_j$ contains an affine subspace of dimension $\frac{d-1}{2}$. We refer the interested reader to \cite{I-K} for more details. 

In a recent paper, the authors and Vinh \cite{KPV18} showed that when we study spheres of primitive radii,  the Stein-Tomas exponent toward $L^p\to L^4$ can be considerably improved. More precisely, the following results have been obtained in \cite{KPV18}.
\begin{theorem}[\cite{KPV18}]\label{thm:main3'}
Let $g$ be a primitive element in $\mathbb{F}_q.$ If either $d=4k+1$  with $k\in \mathbb N$ or $d=4k-1$ and $q\equiv 1\mod 4$, then we have
\[R^*_{S_g}\left(\frac{4d}{3d-2}\to 4\right)\ll 1.\]
\end{theorem}

The most interesting aspect of these theorems comes from the view that there is a different extension phenomenon between paraboloids and spheres. More precisely, in the case of the paraboloid $P$, we know from \cite{MT04} that 
if  $d=4k+1$, $k\in \mathbb{N}$ or $d=4k-1$ with $q\equiv 1 \mod 4,$ then
 $$R_P^*(p\to 4)\ll 1 \quad \mbox{if and only if }\quad \frac{4d-4}{3d-5} \le p\le \infty.$$
These estimates are optimal. Therefore, under the same conditions on $q$ and $d$, the spherical extension theorems are much better. Based on dimensions of affine subspaces, the following conjecture has been provided in \cite{KPV18}. 

\begin{conjecture}\label{conj1.3} Let $S_j$ be the sphere with non-zero radius in $\mathbb F_q^d.$   The following statements hold.
\begin{enumerate}
\item If $d=4k+1$, $k\in \mathbb N,$ and $j$ is not square, then the bound $R^*_{S_j}\left(\frac{4d+4}{3d+1} \to 4\right)\ll 1$ gives the sharp $L^p\to L^4$ estimate.
\item If $d=4k-1$, $k\in \mathbb N,$ $q\equiv 1 \mod{4}$, and $j$ is not square, then the bound $R^*_{S_j}\left(\frac{4d+4}{3d+1} \to 4\right)\ll 1$ gives the sharp $L^p\to L^4$ estimate.
\item If $d=4k-1$, $k\in \mathbb N,$ $q\equiv 3 \mod{4},$ and $j$ is square, then  the bound $R^*_{S_j}\left(\frac{4d+4}{3d+1} \to 4\right)\ll 1$ gives the sharp $L^p\to L^4$ estimate.
\end{enumerate} 
\end{conjecture}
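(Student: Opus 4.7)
\textbf{Proof proposal for Conjecture \ref{conj1.3}.}

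The conjecture is a sharpness statement, so each of the three items requires both the upper bound $R^*_{S_j}(\frac{4d+4}{3d+1}\to 4)\ll 1$ and a matching necessary lower bound on $p$. For the lower bound I would apply the Mockenhaupt--Tao condition \eqref{Necessary2} with $r=4$: setting the second inequality to equality and solving for $p$ in terms of the dimension $k$ of the largest affine subspace contained in $S_j$ yields $p=\frac{4(d-1-k)}{3d-4-3k}$, and a direct check shows that $p=\frac{4d+4}{3d+1}$ corresponds precisely to $k=\frac{d-3}{2}$. Under each of the three hypotheses the classical theory of quadratic forms over $\mathbb F_q$ (cf.\ \cite[Lemma~1.13]{KPV18}) should confirm that the maximum dimension of an affine subspace contained in $S_j$ is exactly $\frac{d-3}{2}$, one dimension short of the square-radius value $\frac{d-1}{2}$. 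Testing \eqref{defR1} against the indicator of such a maximal $H$ then forces $p\geq\frac{4d+4}{3d+1}$.

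For the upper bound I would follow the duality-plus-incidence scheme developed in \cite{KPV18} for Theorems~\ref{thm:main3'} and \ref{thm:main3''}. Passing through duality, the extension estimate at $r=4$ becomes a restriction-type inequality, and standard real interpolation against the trivial $L^1\to L^\infty$ bound reduces the problem to controlling an additive-energy quantity
$$\Lambda_4(E) \;=\; \#\{(x,y,z,w)\in E^4 \,:\, x+y=z+w\}$$
for arbitrary subsets $E\subset S_j$, with the target estimate tight at $|E|\sim q^{(d-3)/2}$. One then recognizes $\Lambda_4(E)$ as a point--hyperplane incidence count in a $(d-1)$-dimensional ambient space, and the plan is to exploit the \emph{new} point--hyperplane incidence bound announced in the abstract. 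Because this bound is derived from a cone restriction theorem, it should encode the quadratic structure of $S_j$ tightly enough to detect the difference between square and non-square radii, which is exactly the distinction singled out by the three cases of the conjecture.

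The principal obstacle, I expect, lies in the Fourier-analytic input feeding the incidence bound. In the non-square-radius regime the Fourier transform of the surface measure on $S_j$ is controlled by Kloosterman sums rather than by Gauss sums, so one cannot invoke a closed-form cancellation and must settle for second-moment estimates. This is precisely the barrier that prevented the methods of \cite{KPV18} from reaching $\frac{4d+4}{3d+1}$ and stopped them at $\frac{4d}{3d-2}$. Closing the gap between these two exponents requires a genuinely new ingredient on the incidence side, and translating the cone restriction theorem into an incidence bound sharp enough in each of the three congruence regimes of the conjecture is where I expect the bulk of the technical work to lie.
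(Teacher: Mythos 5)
There is a fundamental mismatch here: the statement you are trying to prove is a \emph{conjecture}, and neither the paper nor your proposal proves it. The paper (following \cite{KPV18}) records Conjecture \ref{conj1.3} only as a sharpness prediction justified by the dimensions of affine subspaces on $S_j$; its actual theorem in the regime of case (3), Theorem \ref{thm:main1}, reaches only the weaker exponent $\frac{4d}{3d-2}$, obtained through the cone-restriction-based incidence bound (Theorem \ref{newincidence}) and the energy estimate (Theorem \ref{energy-sphere}). Your proposal reproduces exactly this state of affairs: the necessity half is fine (plugging $r=4$ and $k=\frac{d-3}{2}$ into \eqref{Necessary2} indeed gives $p\ge\frac{4d+4}{3d+1}$, and in each of the three cases the maximal affine subspace of $S_j$ has dimension $\frac{d-3}{2}$ rather than $\frac{d-1}{2}$), but the sufficiency half --- the estimate $R^*_{S_j}\bigl(\frac{4d+4}{3d+1}\to 4\bigr)\ll 1$, which is the entire content of the conjecture --- is not established. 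You reduce it to an additive-energy/incidence bound and then explicitly defer the decisive step to ``a genuinely new ingredient on the incidence side.'' That is not a gap in presentation; it is the open problem itself, and no amount of interpolation or duality bookkeeping around it closes it.

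Two smaller inaccuracies in your framing are worth flagging. First, the Kloosterman-versus-Gauss dichotomy is between nonzero and zero radius, not between non-square and square radius: for every $j\ne 0$ the Fourier decay of $S_j$ involves Kloosterman sums. Second, the obstruction that motivated this paper is the opposite of what you describe: the association-scheme method of \cite{KPV18} handles primitive (hence non-square) radii, i.e.\ cases (1)--(2) at the exponent $\frac{4d}{3d-2}$, and it is precisely the square-radius case (3) where that method yields nothing, which is why the cone restriction theorem and the new point--hyperplane incidence bound are introduced. Even with that machinery, the best known exponent in all three cases remains $\frac{4d}{3d-2}$, strictly short of the conjectured $\frac{4d+4}{3d+1}$, so your proposal should be presented as a strategy discussion rather than as a proof.
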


In the proofs of Theorem \ref{thm:main3'}, the main tool in \cite{KPV18} is \textit{the first association scheme graph}, which works for spheres of primitive radii. However, in the case (3) of  Conjecture \ref{conj1.3}, namely, when the radius of the sphere is a square, we get nothing from that method. The main purpose of this paper is to address that case. Our main tool is a new point-hyperplane incidence bound in $\mathbb{F}_q^d$, which will be derived via a cone restriction theorem. Our main result is the following. 

\begin{theorem}\label{thm:main1}
Suppose that $d=4k-1$ with  $k\in \mathbb{N}$, and $q\equiv 3\mod 4$. Let $S_j$ be a sphere in $\mathbb{F}_q^d$ of square radius $j\ne 0$ . Then, we have
\begin{equation}\label{CThm1}R^*_{S_j}\left(\frac{4d}{3d-2}\to 4\right)\ll 1.\end{equation}
\end{theorem}

Although Theorems \ref{thm:main3'} and \ref{thm:main1} do not match the conjecture, in even dimensions, it has been proved in \cite{pham} that the same estimate is sharp. 
\bigskip
\paragraph{\textbf{Applications:}} We now discuss an application of $L^p\to L^4$ extension estimates to a distance problem. 

Given two points $x=(x_1, \ldots, x_d)$ and $y=(y_1, \ldots, y_d)$ in $\mathbb{F}_q^d$, the distance function between them is defined by the following equation
\[||x-y||=(x_1-y_1)^2+\cdots+(x_d-y_d)^2.\]
For $A\subset \mathbb{F}_q^d$, we denote the set of distances determined by pairs of points in $A$ by $\Delta_2(A)$, namely, 
\[\Delta_2(A):=\{||x-y||\colon x, y\in A\}.\]
The Erd\H{o}s-Falconer distance problem over finite fields asks for the smallest number $N$ such that  for any $A\subset \mathbb{F}_q^d$ with $|A|\ge C q^{N}$ the distance set $\Delta_2(A)$ covers the whole field $\mathbb{F}_q$ or a positive proportion of all possible distances. 

In 2007, Iosevich and Rudnev \cite{IR06} proved that, for any $A\subset \mathbb{F}_q^d$ with $d\ge 2$, if the size of $A$ is at least $4q^{(d+1)/2}$, then $\Delta_2(A)=\mathbb{F}_q$. It was proved in \cite{hart} that the exponent $\frac{d+1}{2}$ is sharp in odd dimensional vector spaces in the sense that  for every $\epsilon>0$, there exist a constant $c>0$ and a set $A\subset \mathbb{F}_q^d$ with $d\ge 3$ odd such that $|A|\ge cq^{\frac{d+1}{2}-\epsilon}$ and $|\Delta_2(A)|\sim q^{1-\epsilon}$. In even dimensions, it has been conjectured that the right exponent should be $d/2$, which is in line with the Falconer distance conjecture in the continuous setting \cite{falconer}, which is still wide open.  In two dimensional vector spaces over finite fields, the best current exponent is $\frac{4}{3}$ for arbitrary finite fields \cite{CEHIK10} and $\frac{5}{4}$ for prime fields \cite{mu}. 

For $A\subset \mathbb{F}_q^d$, we define 
\[\Delta_3(A):=\{||x_1\pm x_2\pm x_3||\colon x_1, x_2, x_3\in A\}.\]
Since any choice of the signs will not play an important role in deducing our results, we  choose  the  positive signs for simple writing in the definition of  $\Delta_3(A):$
\[\Delta_3(A):=\{||x_1+ x_2+ x_3||\colon x_1, x_2, x_3\in A\}.\]

Covert, Koh, and Pi \cite{covert2} studied the following variant of the Erd\H{o}s-Falconer distance problem: For $A\subset \mathbb{F}_q^d$, how large does $A$ need to be such that $\Delta_3(A)$ covers the whole field $\mathbb{F}_q$ or at least a positive proportion of all elements? 

The geometric meaning of the size of $\Delta_3(A)$ can be considered as the norm of thriple sums of elements of $A.$

It has been indicated in \cite{covert2} that in odd dimensions, in order to obtain $|\Delta_3(A)|\gg q$, one must have $|A|\gg q^{\frac{d+1}{2}}$, but in even dimensions, we can decrease the threshold $q^{\frac{d+1}{2}}$ to $q^{\frac{d+1}{2}-\epsilon_d}$ for some $\epsilon_d=\epsilon(d)>0$. More precisely, they proved the following theorem.

\begin{theorem}\label{caothien}
Let $A$ be a set in $\mathbb{F}_q^d$ with $d$ even. 
\begin{enumerate}
\item Suppose that $d=4$ and $|A|\gg q^{\frac{32}{13}}$, then we have  $|\Delta_3(A)|\gg q.$
\item Suppose that $d\ge 6$ and $|A|\gg q^{\frac{d+1}{2}-\frac{1}{9d-18}+\epsilon}$ for any $\epsilon>0$, then we have $|\Delta_3(A)|\gg q.$
\end{enumerate}

\end{theorem}

The most interesting aspect of this result is that they have made a connection between the size of $\Delta_3(A)$ and $L^4$ extension estimates for spheres in $\mathbb{F}_q^d$. We now take advantage of the sharp $L^4$ estimate for spheres in even dimensions to improve Theorem \ref{caothien}. Our improvement is as follows.

\begin{theorem}\label{thm:main4-thang}
Let $A$ be a set in $\mathbb{F}_q^d$ with $d$ even. 
\begin{enumerate}
\item Suppose that $d=4$ and $|A|\gg q^{\frac{12}{5}}$, then we have 
\[|\Delta_3(A)|=\#\{||x_1+x_2+x_3||\colon x_1, x_2, x_3\in A\}\gg q.\]
\item Suppose that $d\ge 6$ and $|A|\gg q^{\frac{d+1}{2}-\frac{1}{3d-4}}$, then we have 
\[|\Delta_3(A)|=\#\{||x_1+x_2+x_3||\colon x_1, x_2, x_3\in A\}\gg q.\]
\end{enumerate}
\end{theorem}
\section{A new incidence theorem}
Let $\mathcal{P}$ be a set of points in $\mathbb{F}_q^d$ and $\Pi$ be a set of hyperplanes in $\mathbb{F}_q^d$. Let $I(\mathcal{P}, \Pi)$ be the number of incidences between $\mathcal{P}$ and $\Pi$, i.e.
\[I(\mathcal{P}, \Pi)=\#\left\lbrace (p, \pi)\in \mathcal{P}\times \Pi\colon p\in \pi\right\rbrace.\]
It is well-known that 
\begin{equation}\label{universal}\left\vert I(\mathcal{P}, \Pi)-\frac{|\mathcal{P}||\Pi|}{q}\right\vert\le q^{\frac{d-1}{2}}|\mathcal{P}|^{1/2}|\Pi|^{1/2}.\end{equation}
A proof can be found in \cite{vinhline} or \cite{line} in the language of block designs from the 1980s. 

For incidence bounds of this type, the value $\frac{|\mathcal{P}||\Pi|}{q}$ is understood as the expected number or the main term, and  the value $q^{\frac{d-1}{2}}|\mathcal{P}|^{1/2}|\Pi|^{1/2}$ is the error term. There are several examples that show that the error term cannot be improved. Let us have a brief discussion here. 

\begin{example} \label{Ex2.1}
Assume that $q\equiv 1\mod 4$ and $d$ is odd, there exist sets $\mathcal{P}$ and $\Pi$ such that $|\mathcal{P}|=|\Pi|=q^{\frac{d-1}{2}}$ and $I(\mathcal{P}, \Pi)=q^{\frac{d-1}{2}}|\mathcal{P}|^{1/2}|\Pi|^{1/2}$. Indeed, it has been proved in \cite{hart} that if either $d$ is even and $q\equiv 1\mod 4$ or $d-1=4k$, then there exist $\frac{d-1}{2}$ vectors $\{v_1, \ldots, v_{\frac{d-1}{2}}\}$ in $\mathbb{F}_q^{d-1}$, which are linearly independent, and $v_i\cdot v_j=0$ for all $i, j$. Let $A=\mathtt{Span}\bigg(v_1, \ldots, v_{\frac{d-1}{2}}\bigg)\subset\mathbb{F}_q^{d-1}$. Then we have $|A|=q^{\frac{d-1}{2}}$. Given $\lambda\in \mathbb{F}_q\setminus \{0\}$, define $\mathcal{P}=A\times \{\lambda\}$ and $\Pi$ being the set of hyperplanes defined by the equation $a_1x_1+\cdots+a_{d-1}x_{d-1}+\lambda x_d=\lambda^2$ with $(a_1, \ldots, a_{d-1})\in A$. Since $||a-b||=0$ and $||a||=||b||=0$ for all $a, b\in A$,  we have $a\cdot b=0.$ Hence, the number of incidences between $\mathcal{P}$ and $\Pi$ is $|\mathcal{P}||\Pi|=q^{\frac{d-1}{2}}|\mathcal{P}|^{1/2}|\Pi|^{1/2}$.
\end{example}
In the same argument of Example \ref{Ex2.1}, we also have the following.
\begin{example}
Assume that $q\equiv 3\mod 4$ and $d=4k+1$, $k\in \mathbb{N}.$ Then  there exist sets $\mathcal{P}$ and $\Pi$ with $|\mathcal{P}|=|\Pi|=q^{\frac{d-1}{2}}$ such that $I(\mathcal{P}, \Pi)=q^{\frac{d-1}{2}}|\mathcal{P}|^{1/2}|\Pi|^{1/2}$. 
\end{example}

The main purpose of this section is to provide an improvement of the estimate (\ref{universal}) in the case the point set $\mathcal{P}$ is distributed in at most $q^{1-\epsilon}$ spheres or translates of the paraboloid $P$ for some $0< \epsilon<1$. 

\begin{theorem}\label{newincidence}
Let $\mathcal{P}$ be a set of points in $\mathbb{F}_q^d$ and $\Pi$ be a set of hyperplanes in $\mathbb{F}_q^d$. Let $t$ be the minimum number of spheres of square radii (or translates of the paraboloid $P$) that cover the set $\mathcal{P}$. We assume in addition that $q\equiv 3\mod 4$ and $d=4k-1$ with $k\in \mathbb{N}$.  Then the number of incidences between $\mathcal{P}$ and $\Pi$ satisfies
\[\left\vert I(\mathcal{P}, \Pi)-\frac{|\mathcal{P}||\Pi|}{q}\right\vert\ll t^{1/2}q^{\frac{d-2}{2}}|\mathcal{P}|^{1/2}|\Pi|^{1/2}+t^{1/2}q^{\frac{d-3}{4}}|\mathcal{P}|^{1/2}|\Pi|.\]
\end{theorem}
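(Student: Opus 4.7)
The strategy is to exploit the cover of $\mathcal{P}$ by the $t$ distinguished varieties to reduce to a single-variety incidence bound, which is then attacked via Fourier analysis and a cone restriction/extension estimate in $\mathbb{F}_q^{d+1}$.

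First, choose a minimum cover $\mathcal{P}\subset V_1\cup\dots\cup V_t$ with each $V_i$ either a sphere of square radius or a translate of the paraboloid $P$, and partition $\mathcal{P}=\bigsqcup_{i=1}^t\mathcal{P}_i$ with $\mathcal{P}_i\subset V_i$. By linearity the deviation splits as $\sum_{i=1}^{t}\bigl(I(\mathcal{P}_i,\Pi)-|\mathcal{P}_i||\Pi|/q\bigr)$, so it suffices to prove the single-variety estimate
\[
\Bigl|I(\mathcal{P}_i,\Pi)-\tfrac{|\mathcal{P}_i||\Pi|}{q}\Bigr|\ll q^{(d-2)/2}|\mathcal{P}_i|^{1/2}|\Pi|^{1/2}+q^{(d-3)/4}|\mathcal{P}_i|^{1/2}|\Pi|;
\]
Cauchy--Schwarz via $\sum_i|\mathcal{P}_i|^{1/2}\leq(t|\mathcal{P}|)^{1/2}$ then delivers the theorem.

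For the single-variety bound I would start from the orthogonality expansion
\[
I(\mathcal{P}_i,\Pi)-\tfrac{|\mathcal{P}_i||\Pi|}{q}=\frac{1}{q}\sum_{s\neq 0}\sum_{(a,b)\in\Pi}\chi(-sb)\,\widehat{1_{\mathcal{P}_i}}(-sa).
\]
Since $\mathcal{P}_i\subset V_i$, the relation $\widehat{1_{\mathcal{P}_i}}(\eta)=|V_i|(f_i d\sigma_i)^{\vee}(-\eta)$ with $f_i=\mathbf{1}_{\mathcal{P}_i}$ converts this sum, after the $(q-1)$-to-one change of variable $\xi=-sa$, into a pairing of the shape $\sum_{\xi\neq 0}G(\xi)(f_i d\sigma_i)^{\vee}(\xi)$ in which the weight $G(\xi)=\sum_{s\neq 0,\,b}\mathbf{1}_\Pi(-s^{-1}\xi,b)\chi(-sb)$ depends only on $\Pi$. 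The analytic input is then a cone restriction/extension theorem in $\mathbb{F}_q^{d+1}$. The hypotheses $d=4k-1$, $q\equiv 3\bmod 4$ are precisely those under which the cone $C=\{y\in\mathbb{F}_q^{d+1}:y_1^2+\cdots+y_d^2=y_{d+1}^2\}$ admits a favorable $L^p\to L^4$ extension bound. Both admissible variety types embed Fourier-compatibly into an affine slice of $C$: a sphere of square radius $j=t_0^2$ via $p\mapsto(p,t_0)$, and a paraboloid translate $P_\beta$ via $x\mapsto(x_1,\dots,x_{d-1},\tfrac{x_d+\beta-1}{2},\tfrac{x_d+\beta+1}{2})$, which one checks lies on $C$. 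Applying H\"older to the pairing under two different exponent splits produces the two error terms: Cauchy--Schwarz plus Plancherel delivers $q^{(d-2)/2}|\mathcal{P}_i|^{1/2}|\Pi|^{1/2}$, while an $L^{4/3}$--$L^4$ split invoking the cone extension delivers $q^{(d-3)/4}|\mathcal{P}_i|^{1/2}|\Pi|$.

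The principal obstacle will be to deliver the factor $q^{(d-3)/4}$ rather than the weaker Stein--Tomas exponent $q^{(d-1)/4}$: the saving of $q^{1/2}$ has to come from the arithmetic hypotheses entering the cone restriction essentially, not from the universal bound. Tracking the $q$-factors carefully through the change of variable $\xi=-sa$ and through the normalizations between $dc$ and $d\sigma_i$, and unifying the sphere and paraboloid cases under a single cone estimate rather than two distinct restriction theorems, are secondary technical points that need attention.
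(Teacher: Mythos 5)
Your scaffolding coincides with the paper's: you reduce to $t=1$ by partitioning $\mathcal{P}$ among the covering varieties and summing with $\sum_i|\mathcal{P}_i|^{1/2}\le t^{1/2}|\mathcal{P}|^{1/2}$, and for a single sphere of square radius $j=u^2$ (or a paraboloid translate, after homogenizing) you lift to the cone $C_{d+1}\subset\mathbb{F}_q^{d+1}$, expand by orthogonality in the dilation parameter $s$, and appeal to a cone estimate that is available precisely because $d+1\equiv 0\bmod 4$ and $q\equiv 3\bmod 4$. The gap is in the analytic endgame: neither of your two claimed splits produces the stated exponents. ``Cauchy--Schwarz plus Plancherel'' cannot give the term $q^{\frac{d-2}{2}}|\mathcal{P}_i|^{1/2}|\Pi|^{1/2}$; Plancherel over the whole space is exactly the proof of the universal bound \eqref{universal} and returns $q^{\frac{d-1}{2}}|\mathcal{P}|^{1/2}|\Pi|^{1/2}$. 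The $q^{-1/2}$ gain in the first term already requires using that the lifted point set sits on $C_{d+1}$, i.e.\ that the $L^2$ sum of $\widehat{1_{\Pi'}}$ is taken only over the cone and is then controlled by the cone's Fourier decay, which is where the arithmetic hypotheses enter.

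Likewise, the ``$L^{4/3}$--$L^4$ split invoking the cone extension'' is insufficient for the second term: the sharp $L^2\to L^4$ cone extension of \cite{KLP} is proved there for the four-dimensional cone (here $n=d+1$ runs over all multiples of $4$), and even granting it, its dual applied to the characteristic function of the lifted hyperplane family only yields $\|\widehat{1_{\Pi'}}\|_{L^2(C_{d+1},d\sigma)}\ll|\Pi'|^{3/4}$, which after unwinding the normalizations gives an error of size $q^{\frac{d}{2}-\frac{3}{4}}|\mathcal{P}|^{1/2}|\Pi|^{3/4}$; at $|\Pi|\sim q^{\frac{d+1}{2}}$ this exceeds both target terms by a power of $q$ (about $q^{\frac{d-1}{8}}$). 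What the argument actually needs is the set-specific restriction estimate of Lemma \ref{lm1} (\cite[Lemma 4.1]{KLP}),
\[\|\widehat{G}\|_{L^2(C_n,d\sigma)}\ll|G|^{1/2}+|G|q^{-n/4},\qquad n\equiv 0\bmod 4,\ q\equiv 3\bmod 4,\]
applied with $n=d+1$ and $G=\Pi'=\{s(a,-u^{-1}a_{d+1})\}$: a single Cauchy--Schwarz, pairing $1_{\mathcal{P}'}$ against $\widehat{1_{\Pi'}}$ on the cone, then delivers both error terms at once, the $|G|^{1/2}$ part giving $q^{\frac{d-2}{2}}|\mathcal{P}|^{1/2}|\Pi|^{1/2}$ and the $|G|q^{-\frac{d+1}{4}}$ part giving $q^{\frac{d-3}{4}}|\mathcal{P}|^{1/2}|\Pi|$. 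Note also that the dilation parameter must be absorbed into the geometry, lifting the points to $\{(\lambda p,\lambda u):\lambda\in\mathbb{F}_q\}\subset C_{d+1}$ and the hyperplanes to the dilates $s(a,-u^{-1}a_{d+1})$, rather than keeping the pairing in $\mathbb{F}_q^d$ as your weight $G(\xi)$ does; for the paraboloid one lifts to $(\lambda p,\lambda)$ on $x_{d+1}x_d=x_1^2+\cdots+x_{d-1}^2$, which is the cone after a linear change of variables, rather than using your affine slice embedding.
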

Theorem \ref{newincidence} is most effective when $t$ is bounded by a constant number. 
\section{Proof of Theorem \ref{newincidence}}
To prove Theorem \ref{newincidence}, we recall the following result from \cite[Lemma 4.1]{KLP}.
\begin{lemma}\label{lm1}
For $n\in \mathbb{N}$, let $C_n$ be the cone in $\mathbb{F}_q^n$ defined by 
\[C_n:=\left\lbrace m\in \mathbb{F}_q^n\colon m_n^2=m_1^2+\cdots+m_{n-1}^2\right\rbrace.\]
Suppose that $n\equiv 0 \mod 4$ and $q\equiv 3\mod 4$, then, for any $G\subset \mathbb{F}_q^n$, we have 
$$\|\widehat{G}\|_{L^2(C_n, d\sigma)} 
\ll |G|^{1/2}+\frac{|G|}{q^{\frac{n}{4}}}.
$$
\end{lemma}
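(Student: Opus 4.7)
The plan is to expand the squared $L^2$ norm as a double character sum paired against the Fourier transform $\widehat{1_{C_n}}$, and then substitute an explicit Gauss-sum formula for $\widehat{1_{C_n}}$ whose sign structure is governed by the parity hypotheses. Unfolding the definition of the surface measure,
\[\|\widehat{G}\|_{L^2(C_n,d\sigma)}^2=\frac{1}{|C_n|}\sum_{m\in C_n}|\widehat{G}(m)|^2=\frac{1}{|C_n|}\sum_{x,y\in G}\widehat{1_{C_n}}(x-y),\]
so everything reduces to pointwise control of $\widehat{1_{C_n}}(z)$.

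To compute $\widehat{1_{C_n}}$, write $1_{C_n}(m)=q^{-1}\sum_{s\in\mathbb{F}_q}\chi(sQ(m))$ where $Q(m)=m_1^2+\cdots+m_{n-1}^2-m_n^2$. The $s=0$ term contributes $q^{n-1}\delta_{z,0}$ to $\widehat{1_{C_n}}(z)$. For each $s\neq 0$ the Fourier sum decouples across coordinates after completing the square, producing a product of one-variable quadratic Gauss sums. Using the standard identities $\mathfrak{G}^n=\eta(-1)^{n/2}q^{n/2}$ (where $\eta$ is the quadratic character and $\mathfrak{G}=\sum_t\eta(t)\chi(t)$), $\eta(s)^n=1$ for $n$ even, and $\sum_{s\neq 0}\chi(a/s)=q-1$ if $a=0$ and $-1$ otherwise, one arrives at
\[\widehat{1_{C_n}}(z)=q^{n-1}\delta_{z,0}+\eta(-1)^{n/2+1}q^{n/2-1}\Bigl[(q-1)\,1_{C_n}(z)-1_{\mathbb{F}_q^n\setminus C_n}(z)\Bigr].\]
Under $n\equiv 0\mod 4$ and $q\equiv 3\mod 4$, one has $\eta(-1)^{n/2+1}=-1$, so $\widehat{1_{C_n}}(z)$ is \emph{negative} on $C_n\setminus\{0\}$ with magnitude $\sim q^{n/2}$, and positive on the complement with magnitude only $q^{n/2-1}$; moreover $|C_n|=q^{n-1}-q^{n/2}+q^{n/2-1}\sim q^{n-1}$.

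Substituting back and partitioning $G\times G$ into the diagonal, the pairs with $x-y\in C_n\setminus\{0\}$, and the pairs with $x-y\notin C_n$: the diagonal contributes $|G|\cdot|C_n|/|C_n|=|G|$; the contribution from $x-y\in C_n\setminus\{0\}$ is non-positive thanks to the negative sign and can therefore be discarded when deriving an upper bound; and the off-cone contribution is at most $q^{n/2-1}\cdot|G|^2/|C_n|\ll|G|^2/q^{n/2}$. Combining these and taking square roots yields $\|\widehat{G}\|_{L^2(C_n,d\sigma)}\ll|G|^{1/2}+|G|\,q^{-n/4}$.

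The main obstacle is extracting, and then exploiting, the sign coming out of the Gauss-sum calculation. The hypotheses $n\equiv 0\mod 4$ and $q\equiv 3\mod 4$ are used precisely to force $\eta(-1)^{n/2+1}=-1$; it is this sign that makes the $\sim q^{n/2}$ contribution over $C_n\setminus\{0\}$ non-positive and therefore droppable in the upper bound. If either parity condition fails, the corresponding term becomes positive, and bounding it only in absolute value introduces a contribution of order $|G|^2/q^{n/2-1}$, which loses a factor of $q^{1/2}$ and destroys the target estimate.
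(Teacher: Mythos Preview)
The paper does not actually prove this lemma; it simply quotes it as \cite[Lemma~4.1]{KLP}. Your argument is a correct, self-contained proof: the expansion of $\|\widehat{G}\|_{L^2(C_n,d\sigma)}^2$ as $\frac{1}{|C_n|}\sum_{x,y\in G}\widehat{1_{C_n}}(x-y)$, the Gauss-sum computation of $\widehat{1_{C_n}}$, and the sign analysis under $n\equiv 0\pmod 4$, $q\equiv 3\pmod 4$ are all accurate, and the resulting bound matches the statement. This explicit Fourier-decay-plus-sign argument is in fact the standard way such cone $L^2$ restriction estimates are established (and is almost certainly what the cited reference does as well), so there is no substantive methodological difference to report.
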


We are ready to prove Theorem \ref{newincidence}.
\begin{proof}[Proof of Theorem \ref{newincidence}]
We first prove the case $t=1$. We consider two following cases:

{\bf \textbf{Case} $1$:}
 Assume that $\mathcal{P}$ lies on a sphere centered at the origin of radius $r=u^2$ for some $u\in \mathbb{F}_q\setminus \{0\}$. We can assume the center of the sphere is the origin since the number of incidences is invariant under translations. Notice that  a hyperplane in $\Pi$, given by the equation $a\cdot x=a_{d+1}$,  can be identified with   a vector $(a, a_{d+1})$ in $\mathbb F_q^d$ $\times {\mathbb F_q}={\mathbb F_q^{d+1}}.$

 Define 
\[\mathcal{P}':=\left\lbrace (\lambda p, \lambda u)\colon p\in \mathcal{P}, \lambda\in \mathbb{F}_q\right\rbrace\subset \mathbb{F}_q^{d+1},\] and \[\Pi':=\left\lbrace s(a_1, \ldots, a_d,  -u^{-1}\cdot a_{d+1})\colon s \in \mathbb{F}_q^*,~ a_1x_1+\cdots+a_dx_d=a_{d+1}\in \Pi\right\rbrace.\]
It is clear that $|\mathcal{P}'|\le q|\mathcal{P}|$ and $|\Pi'|\le q|\Pi|$. Note that $\mathcal{P}'$ is a set on the cone $C_{d+1}$. 

We have 
$$I(\mathcal{P}, \Pi) = \sum_{p\in \mathcal{P},  (a, a_{d+1})\in \Pi: a\cdot p=a_{d+1}} 1.$$
The condition $a\cdot p=a_{d+1}$  is equivalent with the equation   $(p, u)\cdot (a, -u^{-1} a_{d+1})=0,$ which implies that  $(\lambda p, \lambda u) \cdot (a, -u^{-1} a_{d+1})=0$ for all $\lambda \in \mathbb F_q.$  Hence,  
$$  I(\mathcal{P}, \Pi) = q^{-1}\sum_{x \in {\mathcal{P}'} ,  (a, a_{d+1})\in \Pi: ~x \cdot (a, -u^{-1} a_{d+1})=0} 1.$$
Applying the orthogonality property of $\chi$,  the incidence $I(\mathcal{P}, \Pi)$ becomes
$$  \frac{|\mathcal{P'}||\Pi|}{q^2} + q^{-2} \sum_{ x \in {\mathcal{P}'} ,  (a, a_{d+1})\in \Pi} \sum_{s\ne 0} \chi(x \cdot s(a, -u^{-1}a_{d+1})).$$ 

From the definition of $\Pi'$,  we get
$$I(\mathcal{P}, \Pi)=\frac{|\mathcal{P}||\Pi|}{q}+\frac{1}{q^2}\sum_{x\in \mathcal{P}', y\in \Pi'}\chi(x\cdot y).$$ By the Cauchy-Schwarz inequality,  

\begin{equation}\label{3.11} \left|I (\mathcal{P}, \Pi)-\frac{|\mathcal{P}||\Pi|}{q}\right| \le \frac{1}{q^2}\sum_{x\in \mathcal{P}'} |\widehat{\Pi'}(x)|\le\frac{1}{q^2}  |\mathcal{P'}|^{\frac{1}{2}} \left( \sum_{x\in C_{d+1}}|\widehat{\Pi'}(x)|^2\right)^{\frac{1}{2}}.\end{equation}


Using Lemma \ref{lm1} with $G=\Pi'$ and $n=d+1$, we have 
\[\sum_{x\in C_{d+1}}|\widehat{\Pi'}(x)|^2\ll q^d\cdot \left( |\Pi'|+q^{-\frac{d+1}{2}}|\Pi'|^2\right).\]
Substituting this estimate into (\ref{3.11}) gives us 
\[\left\vert I(\mathcal{P}, \Pi)-\frac{|\mathcal{P}||\Pi|}{q}\right\vert\ll q^{\frac{d-2}{2}}|\mathcal{P}|^{1/2}|\Pi|^{1/2}+q^{\frac{d-3}{4}}|\mathcal{P}|^{1/2}|\Pi|.\]

{\bf Case $2$:} Assume that $\mathcal{P}$ lies on $P_\beta$, recall, a translate of $P$ by $\beta$ defined by $x_d+\beta=x_1^2+\cdots+x_{d-1}^2$. 
Without loss of generality, we may assume that $\beta=0$ since the number of  incidences is not changed by  translations.
 Define 
\[\mathcal{P}':=\left\lbrace (\lambda p, \lambda )\colon p\in \mathcal{P}, \lambda\in \mathbb{F}_q\right\rbrace\subset \mathbb{F}_q^{d+1},\] and \[\Pi':=\left\lbrace s(a_1, \ldots, a_d, -a_{d+1})\colon s\in \mathbb{F}_q^*, ~a_1x_1+\cdots+a_dx_d=a_{d+1}\in \Pi\right\rbrace.\]

Since $\mathcal{P}$ lies on the paraboloid $P$ in $\mathbb F_q^d$, one can check that $\mathcal{P}'$ lies on the variety defined by the equation 
\[x_{d+1}\cdot x_d=x_1^2+\cdots +x_{d-1}^2,\]
which is the cone $C_{d+1}$ after a change of variables. Thus, the argument in the case $1$ will give us the desired exponent. 

In other words, the case $t=1$ is proved. 

For $t>1$, one can partition the point set $\mathcal{P}$ into $t$ subsets, each on a variety, then we can apply the case $t=1$ for each of these subsets, and the theorem follows from the Cauchy-Schwarz inequality. 
\end{proof}

\section{An energy bound}
 For a set $A$ in $\mathbb F_q^d,$  the additive energy, denoted by $E(A)$ is defined as the number of the pairs $(a,b,c,d)\in A^4$ such that $a+b=c+d.$ 

\begin{theorem}\label{energy-sphere}
Suppose that $d=4k-1$ and $q\equiv 3\mod 4$. Let $S_j$ be a sphere of square radius $j\ne 0$ in $\mathbb{F}_q^d$.  For $A\subset S_j$, we have
\[E(A)\ll \frac{|A|^3}{q}+q^{\frac{d-2}{2}}|A|^2+q^{\frac{d-3}{4}}|A|^{5/2}.\]
\end{theorem}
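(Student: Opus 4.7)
The plan is to recast the additive energy $E(A)$ as a point-hyperplane incidence count where the points lie on the sphere $S_j$, and then to invoke the new incidence theorem (Theorem \ref{newincidence}) with $t=1$. For each $v\in\mathbb{F}_q^d$ set
\[r(v):=\#\{(a,c)\in A^2 : a-c=v\},\]
so that $E(A)=\sum_{v}r(v)^2$. The diagonal $v=0$ contributes $r(0)^2=|A|^2$. For $v\ne 0$, the crucial geometric observation is that if $a,c\in A\subset S_j$ and $a-c=v$, then expanding $j=|a|^2=|c+v|^2=|c|^2+2\,c\cdot v+|v|^2$ together with $|c|^2=j$ forces $c\cdot v=-|v|^2/2$; hence $c$ lies on the hyperplane $H_v:=\{x\in\mathbb{F}_q^d : x\cdot v=-|v|^2/2\}$. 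Consequently $r(v)\le|A\cap H_v|$.

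Using the trivial bound $r(v)^2\le r(v)\cdot|A\cap H_v|$ and summing,
\[E(A)\le|A|^2+\sum_{v\ne 0}r(v)\,|A\cap H_v|=|A|^2+I(A,\Pi),\]
where $\Pi$ denotes the multiset of hyperplanes $\{H_{a-c}:(a,c)\in A^2,\ a\ne c\}$, of total cardinality at most $|A|^2$. Since $A$ lies on the single sphere $S_j$ of square radius, Theorem \ref{newincidence} applies with $t=1$, $|\mathcal{P}|=|A|$, and $|\Pi|\le|A|^2$, giving
\[I(A,\Pi)\ll\frac{|A|^3}{q}+q^{\frac{d-2}{2}}|A|^{3/2}+q^{\frac{d-3}{4}}|A|^{5/2}.\]
Adding the diagonal $|A|^2$ and noting that both $|A|^2$ and $q^{(d-2)/2}|A|^{3/2}$ are dominated by $q^{(d-2)/2}|A|^2$ (since $q^{(d-2)/2}\ge 1$ and $|A|\ge 1$) yields the claimed estimate.

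The main technical point is that Theorem \ref{newincidence} is phrased for a set of hyperplanes, whereas our $\Pi$ is genuinely a multiset since distinct pairs $(a,c)$ can share the same difference $v$ and hence yield the same hyperplane. The Fourier-analytic proof goes through without modification once $|\Pi|$ is read as total multiplicity: one replaces the characteristic function of the lifted set $\Pi'\subset C_{d+1}$ by the corresponding multiset indicator, whose $L^1$-mass equals the multiset cardinality, and Lemma \ref{lm1} is then applied verbatim. I would insert a short remark to justify this extension before quoting the incidence bound; beyond this, the argument is a direct translation of the sphere geometry (each nonzero difference $v$ pins the second endpoint to a specific hyperplane) plus the improved incidence estimate from Section 2.
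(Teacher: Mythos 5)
Your geometric reduction is fine: $E(A)=\sum_v r(v)^2$, the diagonal gives $|A|^2$, and for $v\neq 0$ every $c\in A$ with $c+v\in A$ lies on $H_v=\{x:x\cdot v=-\Vert v\Vert/2\}$, so $E(A)\le |A|^2+\sum_{v\ne 0}r(v)\,|A\cap H_v|$. The gap is in the next step, where you apply Theorem \ref{newincidence} to the \emph{multiset} $\Pi=\{H_{a-c}\}$ with $|\Pi|$ read as the total multiplicity $\sum_{v\ne0}r(v)\le|A|^2$, claiming Lemma \ref{lm1} goes through ``verbatim.'' It does not. In the proof of Theorem \ref{newincidence}, the term $q^{\frac{d-2}{2}}|\mathcal P|^{1/2}|\Pi|^{1/2}$ comes from the first term $|G|^{1/2}$ of Lemma \ref{lm1}, which is an $L^2$ quantity: $|G|^{1/2}=\Vert 1_G\Vert_{L^2(dc)}$. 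If you replace the indicator of $\Pi'$ by a weight function $w$ carrying the multiplicities $r(v)$, the diagonal in the expansion of $\Vert\widehat w\Vert_{L^2(C_{d+1},d\sigma)}^2$ produces $\Vert w\Vert_2^2\sim q\sum_{v\ne0}r(v)^2$, not $\Vert w\Vert_1$; only the second term of Lemma \ref{lm1} scales with the $L^1$ mass. Since $\sum_{v\ne0}r(v)^2=E(A)-|A|^2$ is exactly the quantity you are trying to bound, your middle term $q^{\frac{d-2}{2}}|A|^{1/2}|\Pi|^{1/2}$ with $|\Pi|=|A|^2$ is unjustified (and the ``verbatim'' claim is false in general: concentrate all multiplicity on one hyperplane and the $|\Pi|^{1/2}$ term collapses while the true $\Vert w\Vert_2$ term does not). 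A correct weighted version yields instead
\[
\Bigl|I-\tfrac{|A|W}{q}\Bigr|\ \ll\ q^{\frac{d-2}{2}}|A|^{1/2}\Bigl(\sum_{v\ne0}r(v)^2\Bigr)^{1/2}+q^{\frac{d-3}{4}}|A|^{1/2}W,\qquad W=\sum_{v\ne0}r(v),
\]
which makes your inequality circular unless you add a bootstrapping step: from $E\le |A|^2+\frac{|A|^3}{q}+Cq^{\frac{d-2}{2}}|A|^{1/2}E^{1/2}+Cq^{\frac{d-3}{4}}|A|^{5/2}$ one gets $E\ll \frac{|A|^3}{q}+q^{d-2}|A|+q^{\frac{d-3}{4}}|A|^{5/2}+|A|^2$, and the extra term $q^{d-2}|A|$ is only absorbed into $q^{\frac{d-2}{2}}|A|^2$ when $|A|\ge q^{\frac{d-2}{2}}$, so you must also invoke the trivial bound $E(A)\le|A|^3$ for $|A|\le q^{\frac{d-2}{2}}$. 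None of this is in your write-up, and without it the proof is incomplete. (A cosmetic slip: in the lifting it is the point set $\mathcal P'$, not $\Pi'$, that lies on the cone $C_{d+1}$.)

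For comparison, the paper sidesteps multiplicities entirely: from $a+b=c+d$ on $S_j$ it extracts the orthogonality $(b-d)\cdot(a-d)=0$, bounds $E(A)$ by the number of such triples $(a,b,d)\in A^3$, and, for each \emph{fixed} $d$, counts incidences between the point set $A$ and a genuine \emph{set} of at most $|A|$ hyperplanes indexed by $b$ (with a short argument ruling out parallel directions in the nondegenerate case, and a separate treatment of the null differences $\Vert a-d\Vert=0$ or $\Vert b-d\Vert=0$ via the hyperplanes $a\cdot x=j$). Applying Corollary \ref{coro-newincidence} with $|\mathcal P|=|\Pi|=|A|$ and summing over $d\in A$ gives the stated bound directly, with no weighted restriction estimate and no bootstrap. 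If you want to keep your representation-function formulation, you must either prove and use the weighted analogue of Lemma \ref{lm1} with $\Vert w\Vert_2$ and run the quadratic-inequality argument above, or reorganize the count as the paper does so that the hyperplane family is an honest set.
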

\begin{proof}
We start with the following observation. Given $a, b, c, d\in S_j$, if $a+b=c+d$, then we have 
\[(b-d)\cdot (a-d)=0.\]
This can be viewed as a right angle at $d$. Thus $E(A)$ is bounded by the number of triples $(a, b, d)\in A^3$ such that $(b-d)\cdot (a-d)=0$. We now fall into two cases: 

{\bf Case $1$:} Let $E_1$ be the number of triples $(a, b, d)\in A^3$ such that   $||a-d||=0$ or $||b-d||=0$. We are going to show that 
\[E_1\ll \frac{|A|^3}{q}+q^{\frac{d-2}{2}}|A|^2+q^{\frac{d-3}{4}}|A|^{5/2}.\]
Indeed, if $||a-d||=0$, then we have $a\cdot d=j$. The identity $a\cdot d=j$ can be understood as an incidence between the point $d\in A\subset S_j$ and the hyperplane defined by the equation $a\cdot x=j$. One can apply Theorem \ref{newincidence} with $t\ll 1$ to show that the number of pairs $(a, b)\in A\times A$ such that $||a-d||=0$ is bounded by 
\[\frac{|A|^2}{q}+O(q^{\frac{d-2}{2}}|A|+q^{\frac{d-3}{4}}|A|^{3/2}).\]
By the same argument,  this estimate holds in the case when $||b-d||=0.$
Thus, 
\[E_1\ll \frac{|A|^3}{q}+q^{\frac{d-2}{2}}|A|^2+q^{\frac{d-3}{4}}|A|^{5/2}.\]
{\bf Case $2$:} Let $E_2$ be the number of triples $(a, b, d)\in A^3$ such that $||a-d||\ne 0, ||b-d||\ne 0$. 

We actually prove  the case when at least one of $||a-d||$ and $||b-d||$  is not zero, which  clearly contains the assumption of Case 2. 

For a fixed $d\in A$, we now count the number of pairs $(a, b)\in A^2$ such that $(a-d)\cdot (b-d)=0$. 
When $||a-d||\ne 0$, there is no other point $a'\in A$ such that $a'-d=\lambda (a-d)$ for some $\lambda \in \mathbb{F}_q^*\setminus\{1\}$. The same also holds for $b-d$. 

We observe that the identity $(a-d)\cdot (b-d)=0$ is equivalent with an incidence between the point $a\in A$ and the hyperplane defined by $(b-d)\cdot x=(b-d)\cdot d$. Since $A\subset S_j$, Theorem \ref{newincidence} with $t\ll 1$  gives us that the number of pairs $(a, b)\in A^2$ such that $(a-d)\cdot (b-d)=0$ is at most 
\[\frac{|A|^2}{q}+O(q^{\frac{d-2}{2}}|A|+q^{\frac{d-3}{4}}|A|^{3/2}).\]
Taking the sum over all $d\in A$, 
\[E_2\ll \frac{|A|^3}{q}+q^{\frac{d-2}{2}}|A|^2+q^{\frac{d-3}{4}}|A|^{5/2}.\]



Putting $E_1$ and $E_2$ together, the theorem follows. $\square$
\end{proof}

\section{Extension theorems for spheres}
In this section, we give a complete proof of Theorem \ref{thm:main1}. We start this section with the following lemma.

\begin{lemma}\label{Adidaphat3}
Suppose that $d=4k-1$ and $q\equiv 3\mod 4$. Let $S_j$ be a sphere of square radius $j\ne 0$ in $\mathbb{F}_q^d$.  For $A\subset S_j$ of size $n$, we have
$$ \|(Ad\sigma)^\vee\|_{L^{4}(\mathbb F_q^d, dc)} 
\ll \left\{  \begin{array}{ll} n^{\frac{3}{4}}q^{\frac{-3d+3}{4}}  \quad&\mbox{for}~~q^{\frac{d+1}{2}} \le n\ll q^{d-1}\\
n^{\frac{5}{8}}q^{\frac{-11d+13}{16}} \quad&\mbox{for}~~ q^{\frac{d-1}{2}} \le n\le q^{\frac{d+1}{2}}\\
n^{\frac{1}{2}}q^{\frac{-5d+6}{8}} \quad&\mbox{for}~~ q^{\frac{d-2}{2}} \le n\le q^{\frac{d-1}{2}}\\
n^{\frac{3}{4}}q^{\frac{-3d+4}{4}} \quad&\mbox{for}~~ 1 \le n\le q^{\frac{d-2}{2}}.\end{array}\right.
$$
\end{lemma}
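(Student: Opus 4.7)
The plan is to reduce the $L^4$ bound for the extension operator to the additive energy of $A$ via the standard Plancherel-type identity, and then plug in the energy estimate already proved in Theorem~\ref{energy-sphere}. More precisely, expanding the fourth power and using orthogonality of the additive character $\chi$, I would first establish
\begin{equation*}
\|(Ad\sigma)^\vee\|_{L^4(\mathbb F_q^d, dc)}^4 \;=\; \frac{1}{|S_j|^4}\sum_{m\in \mathbb F_q^d}\Bigl|\sum_{x\in A}\chi(m\cdot x)\Bigr|^4 \;=\; \frac{q^d\, E(A)}{|S_j|^4}.
\end{equation*}
Combined with the standard fact $|S_j|\sim q^{d-1}$ for $j\neq 0$, this reduces the lemma to $\|(Ad\sigma)^\vee\|_{L^4}^4 \sim E(A)/q^{3d-4}$.

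Next I would determine which summand in the three-term bound $E(A)\ll n^3/q + q^{(d-2)/2}n^2 + q^{(d-3)/4}n^{5/2}$ from Theorem~\ref{energy-sphere} dominates in each $n$-regime. A pairwise comparison shows that $n^3/q$ dominates when $n\geq q^{(d+1)/2}$; $q^{(d-3)/4}n^{5/2}$ dominates when $q^{(d-1)/2}\leq n\leq q^{(d+1)/2}$; and $q^{(d-2)/2}n^2$ dominates when $n\leq q^{(d-1)/2}$. Substituting each dominant term into the identity above and taking fourth roots produces exactly the first three cases of the lemma, with the exponents $(-3d+3)/4$, $(-11d+13)/16$, and $(-5d+6)/8$ emerging as routine arithmetic.

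For the smallest range $1\leq n\leq q^{(d-2)/2}$, the bound from Theorem~\ref{energy-sphere} is no longer sharp, so I would replace it with the trivial energy estimate $E(A)\leq n^3$ (fix three entries of a quadruple and the fourth is determined). This yields $\|(Ad\sigma)^\vee\|_{L^4}\ll n^{3/4}q^{(-3d+4)/4}$, matching the fourth case. A direct check confirms that this trivial bound beats the value $n^{1/2}q^{(-5d+6)/8}$ coming from the $q^{(d-2)/2}n^2$ summand precisely when $n\leq q^{(d-2)/2}$, so the crossover point is placed correctly. There is no substantive analytic obstacle here: the proof is a mechanical synthesis of the Plancherel-type identity, Theorem~\ref{energy-sphere}, and the trivial energy bound, and the only care point is keeping the bookkeeping of the four regimes and their exponents straight.
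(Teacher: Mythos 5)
Your proposal is correct and follows essentially the same route as the paper: the orthogonality/Plancherel identity $\|(Ad\sigma)^\vee\|_{L^4}^4 = q^d E(A)/|S_j|^4 \sim E(A)\,q^{-3d+4}$, Theorem~\ref{energy-sphere} in the three larger ranges (where your identification of the dominant energy term in each regime matches the crossover points $q^{(d-1)/2}$ and $q^{(d+1)/2}$), and the trivial bound $E(A)\le n^3$ for $n\le q^{(d-2)/2}$. The exponent bookkeeping checks out, so nothing further is needed.
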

\begin{proof} Using the orthogonal property of $\chi$, we have
\[ \|(Ad\sigma)^\vee\|_{L^{4}(\mathbb F_q^d, dc)} =\frac{q^{\frac{d}{4}}}{|S_j|}\cdot  E(A)^{1/4} \sim q^{\frac{-3d+4}{4}}E(A)^{1/4}.\] 
We now fall into two cases:

{\bf Case $1$:}
 If $q^{\frac{d-2}{2}}\le n\ll q^{d-1}$, then we can apply Theorem \ref{energy-sphere} to get the desired bounds. 

{\bf Case $2$:} If $n\le q^{\frac{d-2}{2}}$, then we use the trivial bound $n^3$ for the energy to conclude the proof.
\end{proof}
\paragraph{\bf{Proof of Theorem} \ref{thm:main1}}
We begin by observing that  the conclusion of Theorem \ref{thm:main1} in \eqref{CThm1} is the same as 
\begin{equation*}\label{sphere1} \|(fd\sigma)^\vee\|_{L^{4}(\mathbb F_q^d, dc)} \ll \|f\|_{L^{4d/(3d-2)}(S_j, d\sigma)}\sim \left(q^{-d+1} \sum_{x\in S_j} |f(x)|^{\frac{4d}{3d-2}}\right)^{\frac{3d-2}/{4d}},\end{equation*}
since the inequality $\ll$ above follows from the definition of \eqref{CThm1} and the similarity symbol $\sim$ above is obtained by  using  the definition of $\|f\|_{L^{4d/(3d-2)}(S_j, d\sigma)}$ together with the fact that $|S_j|\sim q^{d-1}.$
Thus,  to complete the proof of Theorem \ref{thm:main1}, it suffices to prove the following inequality:
\[q^{\frac{3d^2-5d+2}{4d}}\|(fd\sigma)^\vee\|_{L^{4}(\mathbb F_q^d, dc)}\ll \left(\sum_{x\in S_j} |f(x)|^{\frac{4d}{3d-2}}\right)^{\frac{3d-2}{4d}}.\]
Without loss of generality, we may assume that the test function $f$ is a nonnegative real valued function since a general complex valued function $f$ is written as the form $f_1 + i f_2 $ for some real valued functions $f_1$ and $f_2,$ and a real valued function $f$ can be expressed as the difference of two nonnegative real valued functions. 
Furthermore, normalizing the function $f$ if necessary, we may assume that 
\begin{equation}\label{Adidaphat5}\sum_{x\in S_j}|f(x)|^{\frac{4d}{3d-2}}=1.\end{equation}
Therefore, it is sufficient to show that 
\[T:=q^{\frac{3d^2-5d+2}{4d}} \|(fd\sigma)^\vee\|_{L^{4}(\mathbb F_q^d, dc)} \ll 1.\]
Notice that  for a nonnegative real valued function $f$, 
$$\|(fd\sigma)^\vee\|_{L^{4}(\mathbb F_q^d, dc)} = \frac{q^{\frac{d}{4}}}{|S_j|} \left( \sum_{a,b,c, d\in S_j: a+b=c+d} f(a)f(b)f(c)f(d)\right)^{1/4}.$$ 
Hence, without loss of generality,  we may assume that 
the test function $f$ takes the following form:
\begin{equation}\label{Adidaphat4} f(x)=\sum_{i=0}^{\infty} 2^{-i} {A_i(x)},\end{equation}
where  $\{A_i\}$ are disjoint subsets of $S_j.$
It follows from \eqref{Adidaphat5} and \eqref{Adidaphat4} that  
$$ \sum_{i=0}^{\infty} 2^{-\frac{4d}{3d-2}i} |A_i| =1,$$ 
which gives us
\begin{equation}\label{Adidaphat6} |A_i|\le  2^{\frac{4d}{3d-2}i},\quad~~ \forall i.\end{equation}
Let $N=C \log{q}$,  a positive integer for some sufficiently large constant $C.$ It follows that
\begin{align*} T &\le q^{\frac{3d^2-5d+2}{4d}} \sum_{i=0}^N 2^{-i}\|(A_i d\sigma)^\vee\|_{L^{4}(\mathbb F_q^d, dc)} + q^{\frac{3d^2-5d+2}{4d}} \sum_{i=N+1}^\infty 2^{-i}\|(A_i d\sigma)^\vee\|_{L^{4}(\mathbb F_q^d, dc)}\\
&=: M +R.\end{align*}
We first bound $R.$  Since $ |(A_i d\sigma)^\vee (m)|\le 1$ for all $m\in \mathbb F_q^d,$  it is clear that  $\|(A_i d\sigma)^\vee\|_{L^{4}(\mathbb F_q^d, dc)}\le q^{d/4}.$ It therefore follows that
$$ R\le q^{\frac{3d^2-5d+2}{4d}}  q^{\frac{d}{4}}\sum_{i=N+1}^\infty 2^{-i} \ll  q^{\frac{4d^2-5d+2}{4d}} 2^{-N} \ll 1.$$
We now estimate $M.$ To do this,  we decompose  the sum $\sum_{i=0}^N$ as four subsummands as follows:
\begin{align*} \sum_{i=0}^N &= \sum_{\substack{0 \le i \le N\\ 1\le 2^{\frac{4d}{3d-2}i} \le  q^{\frac{d-2}{2}}}}  + 
\sum_{\substack{0 \le i \le N\\ q^{\frac{d-2}{2}} \le 2^{\frac{4d}{3d-2}i} \le  q^{\frac{d-1}{2}}}} +\sum_{\substack{0 \le i \le N\\ q^{\frac{d-1}{2}}\le 2^{\frac{4d}{3d-2}i} \le  q^{\frac{d+1}{2}}}} +\sum_{\substack{0 \le i \le N\\ q^{\frac{d+1}{2}}\le 2^{\frac{4d}{3d-2}i} \ll  q^{d-1}}} \\
&=: \sum_1 +\sum_2 +\sum_{3} +\sum_{4}.\end{align*}
Then, with notations above, the term $M$ is written by
\begin{align*}M=&  q^{\frac{3d^2-5d+2}{4d}}\sum_{1} 2^{-i} \|({A_i}d\sigma)^\vee\|_{L^{4}(\mathbb F_q^d, dc)} 
 +  q^{\frac{3d^2-5d+2}{4d}}\sum_{2} 2^{-i} \|({A_i}d\sigma)^\vee\|_{L^{4}(\mathbb F_q^d, dc)}\\
 &+q^{\frac{3d^2-5d+2}{4d}}\sum_{3} 2^{-i} \|({A_i}d\sigma)^\vee\|_{L^{4}(\mathbb F_q^d, dc)}+q^{\frac{3d^2-5d+2}{4d}}\sum_{4} 2^{-i} \|({A_i}d\sigma)^\vee\|_{L^{4}(\mathbb F_q^d, dc)}\\
 &=: M_1 + M_2+M_3+M_4.
 \end{align*} 
Employing Lemma \ref{Adidaphat3} and using \eqref{Adidaphat6},  we get
\[ M_1 \ll q^{\frac{-d+2}{4d}} \sum_{\substack{0 \le i \le N\\ 1\le 2^{\frac{4d}{3d-2}i} \le  q^{\frac{d-2}{2}}}} 2^{-i} |A_i|^{\frac{3}{4}} 
\le q^{\frac{-d+2}{4d}} \sum_{\substack{0 \le i \le N\\ 1\le 2^{\frac{4d}{3d-2}i} \le q^{\frac{d-2}{2}}}} 2^{\frac{2}{3d-2}i} \ll q^{\frac{-d+2}{4d}} \cdot q^{\frac{d-2}{4d}}=1,\]
\[ M_2 \ll q^{\frac{d^2-4d+4}{8d}}  \sum_{\substack{0 \le i \le N\\ q^{\frac{d-2}{2}} \le 2^{\frac{4d}{3d-2}i} \le  q^{\frac{d-1}{2}}}} 2^{-i} |A_i|^{\frac{1}{2}} 
\le q^{\frac{d^2-4d+4}{8d}}  \sum_{\substack{0 \le i\le N\\ q^{\frac{d-2}{2}} \le 2^{\frac{4d}{3d-2}i} \le  q^{\frac{d-1}{2}}}} 2^{\frac{-d+2}{3d-2}i} \ll q^{\frac{d^2-4d+4}{8d}} \cdot q^{\frac{-d^2+4d-4}{8d}} =1,\]
and 
\[M_4 \ll q^{\frac{-d+1}{2d}}\sum_{\substack{0 \le i \le N\\ q^{\frac{d+1}{2}}\le 2^{\frac{4d}{3d-2}i} \ll  q^{d-1}}} 2^{-i} |A_i|^{\frac{3}{4}} 
\le  q^{\frac{-d+1}{2d}}\sum_{\substack{0 \le i \le N\\ q^{\frac{d+1}{2}}\le 2^{\frac{4d}{3d-2}i} \ll  q^{d-1}}} 2^{\frac{2}{3d-2}i} \ll  q^{\frac{-d+1}{2d}}\cdot q^{\frac{d-1}{2d}} =1. \]
It remains to show that  $M_3 \ll 1,$ which will be proven separately in the cases of $d=3$ and  $d=4k-1\ge 7$.
As before, it follows by Lemma \ref{Adidaphat3} and \eqref{Adidaphat6}
that 
\[ M_3 \ll q^{\frac{d^2-7d+8}{16d}}  \sum_{\substack{0 \le i \le N\\ q^{\frac{d-1}{2}} \le 2^{\frac{4d}{3d-2}i} \le  q^{\frac{d+1}{2}}}} 2^{-i} |A_i|^{\frac{5}{8}} 
\le q^{\frac{d^2-7d+8}{16d}}  \sum_{\substack{0 \le i\le N\\ q^{\frac{d-1}{2}} \le 2^{\frac{4d}{3d-2}i} \le  q^{\frac{d+1}{2}}}} 2^{\frac{-d+4}{6d-4}i}. \]

Hence, if $d=3$, then 
$$M_3 \ll q^{-\frac{1}{12}}  \sum_{\substack{0 \le i\le N\\ q \le 2^{\frac{12}{7}i} \le  q^{2}}} 2^{\frac{i}{14}} \ll q^{-\frac{1}{12}} q^{\frac{1}{12}}=1. $$
On the other hand, if  $d\ge 7,$ then we have
$$M_3 \ll q^{\frac{d^2-7d+8}{16d}}  \sum_{\substack{0 \le i\le N\\ q^{\frac{d-1}{2}} \le 2^{\frac{4d}{3d-2}i} \le  q^{\frac{d+1}{2}}}} 2^{\frac{-d+4}{6d-4}i}
\ll q^{\frac{d^2-7d+8}{16d}}  q^{\frac{-d^2+5d-4}{16d}} = q^{\frac{-2d+4}{16d}}\le 1.$$
This completes the proof of the theorem. $\square$
\section{Three-distance problem (Theorem \ref{thm:main4-thang})}

To prove Theorem \ref{thm:main4-thang}, we need the following results. The first proposition is known as the interpolation proposition.  A detailed proof can be found in \cite{aka}. 

\begin{proposition}\label{co2} Let $1\le  r_0, r_1, p_0, p_1\le \infty$ with $r_0\le r_1$ and $p_0\le p_1$.
\begin{enumerate}
\item Suppose that $T$ is an linear operator and the following two estimates hold for all functions $f$:
$$ \|Tf\|_{L^{r_0}} \le C_0 \quad\mbox{and}\quad 
\|Tf\|_{L^{r_1}} \le C_1.$$
Then we have
$$\|Tf\|_{L^{r}} \le C_0^{1-\theta} C_1^\theta$$
for any $0\le \theta \le 1$ with
$$ \frac{1-\theta}{r_0} + \frac{\theta}{r_1}=\frac{1}{r} .$$ 
\item Suppose that $T$ is an linear operator and the following two estimates hold for all functions $f$:
\[||Tf||_{L^{r_0}}\le C_0 ||f||_{L^{p_0}},\quad||Tf||_{L^{r_1}}\le C_1 ||f||_{L^{p_1}}.\]
Then we have 
\[||Tf||_{L^r}\le C_0^{1-\theta}C_1^{\theta}||f||_{L^p},\]
for any $0\le \theta\le 1$ with \[\frac{1}{p}=\frac{1-\theta}{p_0}+\frac{\theta}{p_1}, \quad \frac{1}{r}=\frac{1-\theta}{r_0}+\frac{\theta}{r_1}.\]
\end{enumerate}
\end{proposition}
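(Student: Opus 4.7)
The plan is to reduce this to two classical interpolation results on the finite space $\mathbb{F}_q^d$: part (1) is the log-convexity of $L^p$ norms, and part (2) is a Riesz--Thorin type theorem. Because the underlying space is finite, all measurability concerns disappear and the complex-analytic arguments simplify.

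For part (1), I would apply H\"older's inequality directly. Since $\tfrac{1}{r}=\tfrac{1-\theta}{r_0}+\tfrac{\theta}{r_1}$, the exponents $u=\tfrac{r_0}{r(1-\theta)}$ and $v=\tfrac{r_1}{r\theta}$ are H\"older conjugate, so
\[
\|Tf\|_{L^r}^r=\int |Tf|^{r(1-\theta)}\cdot |Tf|^{r\theta}\,dc\le \|Tf\|_{L^{r_0}}^{r(1-\theta)}\|Tf\|_{L^{r_1}}^{r\theta}\le C_0^{r(1-\theta)}C_1^{r\theta},
\]
and taking $r$-th roots yields the claim.

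For part (2) I would run the standard complex-analytic Riesz--Thorin argument. After normalizing so that $\|f\|_{L^p}=1$, duality reduces the task to showing $|\langle Tf,g\rangle|\le C_0^{1-\theta}C_1^\theta$ for every $g$ with $\|g\|_{L^{r'}}=1$. Setting $\tfrac{1}{p(z)}=\tfrac{1-z}{p_0}+\tfrac{z}{p_1}$ and $\tfrac{1}{r'(z)}=\tfrac{1-z}{r_0'}+\tfrac{z}{r_1'}$, define the analytic families
\[
f_z=|f|^{p/p(z)}\,\mathrm{sgn}(f),\qquad g_z=|g|^{r'/r'(z)}\,\mathrm{sgn}(g),
\]
with $f_z:=0$ where $f=0$, and analogously for $g_z$. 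Because $\mathbb{F}_q^d$ is finite, $F(z):=\langle Tf_z,g_z\rangle$ is a finite linear combination of exponentials in $z$, hence entire. On $\Re z=0$ one checks that $|f_z|=|f|^{p/p_0}$ and $|g_z|=|g|^{r'/r_0'}$, so $\|f_z\|_{L^{p_0}}=\|g_z\|_{L^{r_0'}}=1$, and the first hypothesis together with duality gives $|F(z)|\le C_0$ there; likewise $|F(z)|\le C_1$ on $\Re z=1$. Hadamard's three-lines lemma then yields $|F(\theta)|\le C_0^{1-\theta}C_1^\theta$, and since $p(\theta)=p$ and $r'(\theta)=r'$ we have $F(\theta)=\langle Tf,g\rangle$, finishing the proof.

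The main source of potential error is the bookkeeping that produces $|f_z|=|f|^{p/p_0}$ on $\Re z=0$: this relies on $\Re(1/p(it))=1/p_0$, so the exponent $p/p(it)$ has real part $p/p_0$, and the complex phase of $|f|^{p/p(it)}$ contributes nothing to the modulus. There is no deeper obstacle: finiteness of $\mathbb{F}_q^d$ removes any analytic pathology, and Hadamard's three-lines lemma applies verbatim to the entire function $F$ on the closed strip $0\le \Re z\le 1$.
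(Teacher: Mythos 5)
Your proof is correct: part (1) is the standard log-convexity of $L^p$ norms via H\"older with the conjugate pair $u=\frac{r_0}{r(1-\theta)}$, $v=\frac{r_1}{r\theta}$, and part (2) is the classical Riesz--Thorin three-lines argument, which is exactly what the paper relies on, since it gives no proof of its own and simply cites Grafakos for this proposition. The only points left implicit are routine: $F$ is bounded on the strip (it is a finite combination of $e^{bz}$ with real $b$, as needed for the three-lines lemma), and the degenerate cases such as $p_0=p_1$ or an infinite exponent, where one simply does not vary $f_z$ (resp.\ $g_z$) in $z$.
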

For $t\in \mathbb{F}_q$, let $\mu_3(t)$ be the number of triples $(x, y, z)\in A^3$ such that $||x+y+z||=t$. We recall the following lemma from \cite{covert2}. 
\begin{lemma}[\cite{covert2}, Lemma 2.6, Lemma 2.7]\label{loai0}
Let $A\subset\mathbb{F}_q^d$ with $d\ge 4$ even. If  $|A|\ge 3q^{d/2}$, then we have 
\[\left(|A|^3-\mu_3(0)\right)^2\ge \frac{|A|^6}{9},\]
and 
\[q^{-d}\left\vert \sum_{x\in S_0}\left(\widehat{A}(x)\right)^3  \right\vert^2 -\mu_3(0)^2\le \frac{4|A|^6}{q}.\]
Here, we note that $\widehat{A}(x)=\sum_{m\in A}\chi(-x\cdot m).$ 
\end{lemma}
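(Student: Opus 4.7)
The approach is entirely Fourier-analytic and hinges on an explicit evaluation of $\widehat{1_{S_0}}$ in even dimension via Gauss sums. Using the orthogonality identity $1_{S_0}(m) = q^{-1}\sum_{s \in \mathbb{F}_q}\chi(s\|m\|)$, the $s = 0$ term contributes a Kronecker delta at $u = 0$, and for $s \ne 0$ completing the square in each coordinate reduces the sum to $\chi(-\|u\|/(4s))\, G(s)^d$, where $G(s) = \sum_n \chi(sn^2)$ is the one-variable Gauss sum. Because $d$ is even, $G(s)^d = G(1)^d =: G$ is independent of $s \in \mathbb{F}_q^*$ and real with $|G| = q^{d/2}$. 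Evaluating the residual sum $\sum_{s \ne 0}\chi(-\|u\|/(4s))$, which equals $q-1$ if $u \in S_0$ and $-1$ otherwise, produces the three-valued formula
\[
\widehat{1_{S_0}}(u) = q^{d-1}\mathbf{1}_{\{u=0\}} + G\,\mathbf{1}_{\{u \in S_0\}} - G/q.
\]

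With this formula in hand, part (2) reduces to a direct manipulation. Grouping terms of $\sum_{x \in S_0}\widehat{A}(x)^3$ by $u = m_1+m_2+m_3$ with $N(u) = \#\{(m_1,m_2,m_3) \in A^3 : m_1+m_2+m_3 = u\}$, substituting the three-valued formula, and collecting using $\sum_u N(u) = |A|^3$ and $\sum_{u \in S_0} N(u) = \mu_3(0)$ yields the key identity
\[
\sum_{x \in S_0}\widehat{A}(x)^3 = N(0)\,q^{d-1} + G\bigl[\mu_3(0) - |A|^3/q\bigr].
\]
Squaring (using $G^2 = q^d$), dividing by $q^d$, and subtracting $\mu_3(0)^2$ leaves the four-term expression $N(0)^2 q^{d-2} + 2N(0) q^{-1} G[\mu_3(0) - |A|^3/q] - 2\mu_3(0)|A|^3/q + |A|^6/q^2$. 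Discarding the negative piece $-2\mu_3(0)|A|^3/q$ and applying the trivial bounds $N(0) \le |A|^2$, $\mu_3(0) \le |A|^3$, and $|G| = q^{d/2}$, the hypothesis $|A| \ge 3q^{d/2}$ converts each remaining term into a constant multiple of $|A|^6/q$, summing to at most $4|A|^6/q$.

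For part (1), I would apply Fourier inversion to obtain $\mu_3(0) = q^{-d}\sum_\eta \widehat{1_{S_0}}(\eta)\widehat{A}(\eta)^3$. Substituting the three-valued formula the $\eta=0$ contribution is $q^{-1}|A|^3$, while the off-origin terms are controlled by $|\widehat{1_{S_0}}(\eta)| \le q^{d/2}$ combined with $\sum_{\eta \ne 0}|\widehat{A}(\eta)|^3 \le |A|\sum_\eta|\widehat{A}(\eta)|^2 = q^d|A|^2$ (Plancherel plus sup-norm), giving $\mu_3(0) \le 2|A|^3/q + q^{d/2}|A|^2$. Under $|A| \ge 3q^{d/2}$ both pieces collapse into $\tfrac{2}{3}|A|^3$, whence $|A|^3 - \mu_3(0) \ge |A|^3/3$ and squaring gives the desired bound. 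The main obstacle is not any single technical step but rather the clean identity in part (2); its derivation depends crucially on the uniformity of $G(s)^d$ in $s \in \mathbb{F}_q^*$ (which fails in odd dimension) and on the cancellation of the $N(0)$ terms when splitting $\sum_u N(u)$ into pieces matched to the three values of $\widehat{1_{S_0}}$, so that the resulting expression becomes an algebraic rearrangement rather than a delicate estimate.
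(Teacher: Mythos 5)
First, a framing remark: the paper does not prove this lemma at all --- it is quoted verbatim from \cite{covert2} (Lemmas 2.6 and 2.7) --- so your attempt is being measured against that source's Gauss-sum computation, which it largely reconstructs. Your part (2) is correct. The three-valued formula $\widehat{1_{S_0}}(u)=q^{d-1}\delta_0(u)+G\,1_{S_0}(u)-G/q$ is valid for even $d$ (with $G=\pm q^{d/2}$ real and $0\in S_0$), the identity $\sum_{x\in S_0}\widehat{A}(x)^3=q^{d-1}N(0)+G\left(\mu_3(0)-|A|^3/q\right)$ follows by grouping over $u=m_1+m_2+m_3$, and with $N(0)\le |A|^2$, $\mu_3(0)\le |A|^3$, $q^{d/2}\le |A|/3$, $q\ge 3$ your four terms are bounded by $\left(\tfrac{1}{9q}+\tfrac{2}{3}+1\right)|A|^6/q<4|A|^6/q$, as claimed.

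Part (1), however, has a genuine gap at the final arithmetic step. From your bound $\mu_3(0)\le 2|A|^3/q+q^{d/2}|A|^2$ together with $q\ge 3$ and $|A|\ge 3q^{d/2}$ you can only conclude $\mu_3(0)\le \tfrac{2}{3}|A|^3+\tfrac{1}{3}|A|^3=|A|^3$, which is vacuous; the assertion that ``both pieces collapse into $\tfrac{2}{3}|A|^3$'' fails for $q=3$ and $q=5$ (for $q=3$ your bound is exactly $|A|^3$), so you do not reach $|A|^3-\mu_3(0)\ge |A|^3/3$, i.e.\ the stated constant $1/9$, for all odd $q$. The loss comes from bounding the zero frequency by $2|A|^3/q$ and the nonzero frequencies by $q^{d/2}|A|^2$ separately. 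To close it, keep the exact quantities: $|S_0|=q^{d-1}+\epsilon(q-1)q^{\frac{d}{2}-1}$ with $\epsilon=\pm1$, the bound $|\widehat{1_{S_0}}(\eta)|\le \tfrac{q-1}{q}\,q^{d/2}$ for $\eta\ne 0$, and Plancherel restricted to nonzero frequencies, $\sum_{\eta\ne 0}|\widehat{A}(\eta)|^2=q^d|A|-|A|^2$. Then the Gauss-sum part of the main term, $\tfrac{q-1}{q}q^{-d/2}|A|^3$ (present in the worst case $\epsilon=+1$), is exactly cancelled by the $-\tfrac{q-1}{q}q^{-d/2}|A|^3$ produced by the error estimate, giving $\mu_3(0)\le \tfrac{|A|^3}{q}+\tfrac{q-1}{q}q^{d/2}|A|^2\le \tfrac{1}{3}|A|^3+\tfrac{1}{3}|A|^3=\tfrac{2}{3}|A|^3$ for every odd $q\ge 3$, hence $\left(|A|^3-\mu_3(0)\right)^2\ge |A|^6/9$. (For the applications in the paper, where all constants are absorbed into $\ll$, your weaker constant would in fact suffice, but the lemma as stated needs this sharper bookkeeping.)
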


We have the following lemma on the average of the second moment of function $\mu_3(t)$.

\begin{lemma}\label{bo6}
Let $A$ be a set in $\mathbb{F}_q^d$ with $d$ even. 
\begin{enumerate}
\item Suppose that $|A|\ge 3 q^{d/2}$ and $d=4$, then we have 
\[\sum_{t\in \mathbb{F}_q^*}\mu_3(t)^2\ll \frac{|A|^6}{q}+q^3|A|^{\frac{13}{3}}.\]
\item Suppose that $|A|\ge 3q^{d/2}$ and $d\ge 6$, then we have 
\[\sum_{t\in \mathbb{F}_q^*}\mu_3(t)^2\ll \frac{|A|^6}{q}+q^{\frac{3d^2-5d+2}{4d-8}}|A|^{5-\frac{d}{2d-4}}.\]
\end{enumerate}
\end{lemma}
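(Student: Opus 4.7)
The plan is to reduce $\sum_{t\ne 0}\mu_3(t)^2$ to the Fourier side on $\mathbb{F}_q$ and then apply the sharp paraboloid extension estimates in $\mathbb{F}_q^{d+1}$. Writing $\mathbf 1_{S_t}(w)=q^{-1}\sum_{s\in\mathbb F_q}\chi(s(\|w\|-t))$ and unfolding the definition of $\mu_3$, I obtain $\mu_3(t)=q^{-1}\sum_{s}\chi(-st)F(s)$, where $F(s):=\sum_{x,y,z\in A}\chi(s\|x+y+z\|)$. Plancherel on $\mathbb{F}_q$ then gives
\[
\sum_{t\in\mathbb{F}_q}\mu_3(t)^2 \;=\; \frac{|A|^6}{q} + \frac{1}{q}\sum_{s\ne 0}|F(s)|^2,
\]
which already accounts for the main term of the lemma; it remains to bound $\sum_{s\ne 0}|F(s)|^2$.

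Let $N=\mathbf 1_A*\mathbf 1_A*\mathbf 1_A$ and lift $N$ to the paraboloid $P=\{(w,\|w\|):w\in\mathbb F_q^d\}\subset\mathbb F_q^{d+1}$ via $\widetilde N(w,\|w\|)=N(w)$. A direct computation yields $F(s)=q^d(\widetilde N\,d\sigma_P)^\vee(0,\ldots,0,s)$, so $\sum_{s\ne 0}|F(s)|^2$ is essentially the squared $L^2$ norm of $(\widetilde N\,d\sigma_P)^\vee$ along the axial slice $\{(0,\ldots,0,s)\}$. Since $d+1$ is odd, two extension estimates for $P$ are at my disposal: the sharp Mockenhaupt--Tao estimate $R^*_P\bigl(\tfrac{4d}{3d-2}\to 4\bigr)\ll 1$ and the $L^2\to L^{(2d+4)/d}$ Stein--Tomas estimate. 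Applying each to $\widetilde N$ and using H\"older on the one-dimensional axial slice produces two candidate bounds, one of the form $q^{(d+3)/2}\bigl(\sum_w N(w)^{4d/(3d-2)}\bigr)^{(3d-2)/(2d)}$ and the other of the form $q^{d+2/(d+2)}E_3(A)$, where $E_3(A)=\sum_w N(w)^2$.

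To finish, I decompose $N=\sum_i 2^i\mathbf 1_{A_i}$ into dyadic level sets and, on each level, use the better of the two extension bounds for the contribution $\sum_s|F_i(s)|^2$ coming from $F_i(s)=2^i\sum_{w\in A_i}\chi(s\|w\|)$. Combining with Cauchy--Schwarz in $i$ over the $O(\log q)$ dyadic scales, and with the constraints $\sum_i 2^i|A_i|=|A|^3$, $|A_i|\le q^d$, and $E_3(A)\le|A|^5$, the crossover $|A_i|\sim q^{d(d+1)/(d+2)}$ at which the two paraboloid estimates attain the same strength dictates the final exponents. This yields $q^3|A|^{13/3}$ when $d=4$ and $q^{(3d^2-5d+2)/(4d-8)}|A|^{5-d/(2d-4)}$ when $d\ge 6$.

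\textbf{Main obstacle.} The delicate point is the final dyadic balancing: neither of the two paraboloid extension estimates alone gives the sharp exponents, and the odd-looking fractional exponents in the statement arise precisely from optimizing the dyadic sum at the crossover threshold where the two estimates agree. A secondary issue is that $d=4$ sits at the boundary of validity of the $d\ge 6$ formula and must be recorded with its own exponents; the same optimization works, but the numerics take a different form because the relevant dyadic range degenerates.
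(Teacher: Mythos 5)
Your reduction itself is fine: writing $\mu_3(t)=q^{-1}\sum_s\chi(-st)F(s)$ with $F(s)=\sum_w N(w)\chi(s\|w\|)$, $N=1_A*1_A*1_A$, Plancherel in $t$ is an exact identity, and $F(s)=q^d(\widetilde N d\sigma_P)^\vee(0,\ldots,0,s)$ is correct, so you have honestly isolated the error term. This is a genuinely different route from the paper, which never leaves $\mathbb{F}_q^d$: there one expands $\sum_t\mu_3(t)^2$ using the exact formula for $\sum_t\widehat{S_t}(m)\overline{\widehat{S_t}(v)}$ from \cite{sun}, uses Lemma \ref{loai0} (this is where $|A|\ge 3q^{d/2}$ enters) to discard the zero-radius sphere, and reduces everything to $\max_{r\ne 0}\sum_{v\in S_r}|\widehat{A}(v)|^3$, which is then estimated by interpolating the dual of the sharp even-dimensional spherical extension estimate $R^*_{S_t}(\tfrac{4d}{3d-2}\to 4)$ of \cite{pham}. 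That sphere estimate is the actual engine of the lemma, and your argument never invokes it.

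The gap is in the step you yourself flag as delicate: after lifting to the paraboloid in $\mathbb{F}_q^{d+1}$, you bound the $L^2$ norm of $(\widetilde N d\sigma_P)^\vee$ on the one-dimensional axial line by H\"older against its $L^r$ norm over all of $\mathbb{F}_q^{d+1}$, and this loses a power of $q$ that no dyadic optimization can recover. Concretely, for $d=4$ your two candidate bounds for a level $N\sim\lambda$ on a set of size $m$ come out (after dividing by $q$, matching the normalization of the lemma) as $\sim q^{5/2}\lambda^2m^{5/4}$ and $\sim q^{10/3}\lambda^2 m$. The configuration $\lambda\sim|A|^2$, $m\sim|A|$ is consistent with every constraint you list ($\sum_i2^i|A_i|=|A|^3$, $|A_i|\le q^d$, $E_3(A)\le|A|^5$) and is realized, e.g., by $A$ essentially the coordinate plane $\{(x_1,x_2,0,0)\}$ in $\mathbb{F}_q^4$ with $|A|\sim q^2$; there the minimum of your two bounds is $\min\bigl(q^{5/2}|A|^{21/4},\,q^{10/3}|A|^5\bigr)=q^{13}$, while the right-hand side of the lemma is $|A|^6/q+q^3|A|^{13/3}\sim q^{35/3}$. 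So the chain of inequalities you propose cannot close for such $A$ (even though the lemma is true for it, with the actual value $\sim q^{11}$), and the assertion that the crossover optimization ``yields $q^3|A|^{13/3}$ when $d=4$'' is not substantiated — with the stated tools it is false. To prove the lemma you need an estimate that sees the restriction of $\widehat{A}$ to individual spheres in $\mathbb{F}_q^d$ (as the paper's $L^3(S_t,d\sigma)$ bound does), not a global $L^r$ extension bound in $\mathbb{F}_q^{d+1}$ crushed onto a line.
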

Now we will make a reduction for the proof of Lemma \ref{bo6}.
We observe that
\begin{align*}
\mu_3(t)&=\sum_{x, y, z\in \mathbb{F}_q^d}A(x)A(y)A(z)S_t(x+y+z)\\
&=\frac{1}{q^d}\cdot \sum_{x, y, z\in \mathbb{F}_q^d}A(x)A(y)A(z)\sum_{m\in \mathbb{F}_q^d}\widehat{S_t}(m)\chi(m\cdot (x+y+z))\\
&=\frac{1}{q^d}\cdot\sum_{m\in \mathbb{F}_q^d}\widehat{S_t}(m)\left(\overline{\widehat{A}(m)}\right)^3.
\end{align*}
Thus, we have 
\begin{align}\label{mot}
\sum_{t\in \mathbb{F}_q}\mu_3(t)^2=\sum_{t\in \mathbb{F}_q}\mu_3(t)\overline{\mu_3(t)}=\frac{1}{q^{2d}}\cdot\sum_{m, v\in \mathbb{F}_q^d}\sum_{t\in \mathbb{F}_q}\widehat{S_t}(m)\overline{\widehat{S_t}(v)} \left(\overline{\widehat{A}(m)}\right)^3\left({\widehat{A}(v)}\right)^3.
\end{align}
We also make use of the following lemma which is taken from \cite[Proposition 2.2]{sun}. 
\begin{lemma}
For $m, v\in \mathbb{F}_q^d$, we have 
\[\sum_{t\in \mathbb{F}_q}\widehat{S_t}(m)\overline{\widehat{S_t}(v)}=q^{2d}\cdot \bigg(\frac{\delta_0(m)\delta_0(v)}{q}+q^{-(d+1)}\sum_{s\ne 0}\chi(s\cdot (||m||-||v||))\bigg),\]
where $\delta_0(m)=1$ if $m$ is a zero vector, and  $0$ otherwise.
\end{lemma}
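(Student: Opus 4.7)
The plan is to unfold both factors on the left-hand side into character sums over $\mathbb{F}_q^d$, collapse the outer sum over $t$ into a single constraint $||x||=||y||$, linearize that constraint by orthogonality of $\chi$, and then reduce the resulting factored sums to Gauss sums. Concretely, writing $\widehat{S_t}(m) = \sum_{x: ||x||=t}\chi(-m\cdot x)$ and $\overline{\widehat{S_t}(v)} = \sum_{y: ||y||=t}\chi(v\cdot y)$, the dependence on $t$ appears only through the joint condition $||x||=||y||=t$, so summing over $t\in \mathbb{F}_q$ produces
\[
\sum_{t\in \mathbb{F}_q}\widehat{S_t}(m)\overline{\widehat{S_t}(v)} \;=\; \sum_{\substack{x,y\in \mathbb{F}_q^d \\ ||x||=||y||}}\chi(v\cdot y-m\cdot x).
\]
Using the identity $\mathbf{1}_{||x||=||y||}=q^{-1}\sum_{s\in \mathbb{F}_q}\chi(s(||x||-||y||))$ and interchanging sums, I would factor the right-hand side as $q^{-1}\sum_{s}F(m,s)\,F(-v,-s)$, where $F(m,s):=\sum_{x\in \mathbb{F}_q^d}\chi(s||x||-m\cdot x)$.

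The $s=0$ contribution is immediate: $F(m,0)=q^d\delta_0(m)$ and $F(-v,0)=q^d\delta_0(v)$, yielding the first summand $q^{2d-1}\delta_0(m)\delta_0(v)=q^{2d}\cdot q^{-1}\delta_0(m)\delta_0(v)$. For $s\ne 0$, since $q$ is odd I would complete the square coordinatewise, writing $s||x||-m\cdot x = s\,||x-(2s)^{-1}m||-(4s)^{-1}||m||$. After translating $x$ the inner sum factors as $d$ one-variable Gauss sums, giving
\[
F(m,s)=\chi\!\left(-(4s)^{-1}||m||\right)\eta(s)^d G^d,
\]
where $\eta$ is the quadratic character of $\mathbb{F}_q^*$ and $G=\sum_{t\in \mathbb{F}_q}\chi(t^2)$ is the standard Gauss sum. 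The same calculation gives $F(-v,-s)=\chi\!\left((4s)^{-1}||v||\right)\eta(-s)^d G^d$, so their product equals $\chi\!\left((4s)^{-1}(||v||-||m||)\right)\eta(-1)^d G^{2d}$, using $\eta(s)\eta(-s)=\eta(-s^2)=\eta(-1)$.

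The classical evaluation $G^2=\eta(-1)q$ then yields $\eta(-1)^d G^{2d}=\eta(-1)^{2d}q^d=q^d$, so the $s\ne 0$ contribution collapses to
\[
q^{d-1}\sum_{s\ne 0}\chi\!\left((4s)^{-1}(||v||-||m||)\right).
\]
A final change of variable $s\mapsto -(4s)^{-1}$, which is a bijection of $\mathbb{F}_q^*$ because $q$ is odd, rewrites this sum as $q^{d-1}\sum_{s\ne 0}\chi(s(||m||-||v||))=q^{2d}\cdot q^{-(d+1)}\sum_{s\ne 0}\chi(s(||m||-||v||))$, matching the target. I do not expect a genuine obstacle: the only care needed is in tracking the Gauss-sum exponents and the signs in the square-completion and change of variable, and everything remains valid because $q$ is odd so $2$ and $4$ are invertible.
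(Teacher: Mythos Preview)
Your proof is correct. The paper does not actually prove this lemma; it simply quotes the result from \cite[Proposition~2.2]{sun}. Your argument is a clean, self-contained verification: you unfold the two Fourier transforms, collapse the sum over $t$ into the constraint $\|x\|=\|y\|$, linearize that constraint via orthogonality of $\chi$, and then evaluate the resulting factored sums by completing the square and using the standard Gauss sum identity $G^2=\eta(-1)q$. Every step checks out, including the bookkeeping with $\eta(-1)^d$ and the final change of variable on $\mathbb{F}_q^*$. So your route is not so much \emph{different} from the paper's as it is \emph{more}: the paper outsources the proof, while you supply it directly.
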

 
Substituting this bound to (\ref{mot}), we get
\begin{equation*}
\sum_{t\in \mathbb{F}_q}\mu_3(t)^2
=\frac{1}{q}\left|{\widehat{A}(0)}\right|^6+\frac{1}{q^{d+1}}\cdot \sum_{m, v\in \mathbb F_q^d}\left(\overline{\widehat{A}(m)}\right)^3\left({\widehat{A}(v)}\right)^3\left(\sum_{s\in \mathbb F_q}\chi(s(||m||-||v||))-1\right).\end{equation*}
Using the orthogonality of $\chi$,  we get
$$\sum_{t\in \mathbb{F}_q}\mu_3(t)^2=\frac{|A|^6}{q}+\frac{1}{q^d}\cdot\sum_{||m||=||v||}\left(\overline{\widehat{A}(m)}\right)^3 \left({\widehat{A}(v)}\right)^3-\frac{1}{q^{d+1}} \left\vert\sum_{v\in \mathbb{F}_q^d}\left({\widehat{A}(v)}\right)^3\right\vert^2.$$
Since the last term above is negative,  it follows that
\begin{align*}\sum_{t\in \mathbb{F}_q}\mu_3(t)^2 &\le \frac{|A|^6}{q}+\frac{1}{q^d}\cdot\sum_{||m||=||v||}\left(\overline{\widehat{A}(m)}\right)^3\left({\widehat{A}(v)}\right)^3\\
&\le \frac{|A|^6}{q}+\frac{1}{q^d}\cdot\sum_{r\in \mathbb{F}_q}\left\vert \sum_{||v||=r}\left({\widehat{A}(v)}\right)^3\right\vert^2.\\
\end{align*}
Lemma \ref{loai0} tells us that 
\begin{align*}
\sum_{t\ne 0}\mu_3(t)^2&\le \frac{|A|^6}{q}+\frac{1}{q^d}\cdot \left(\max_{r\ne 0}\sum_{v\in S_r}\left|{\widehat{A}(v)}\right|^3\right)\sum_{v\in \mathbb F_q^d}\left|{\widehat{A}(v)}\right|^3\\
&\le \frac{|A|^6}{q}+|A|^{2}\left(\max_{r\ne 0}\sum_{v\in S_r}\left|{\widehat{A}(v)}\right|^3\right),
\end{align*}
 where  we have used the H\"{o}lder inequality and the facts that $\sum_{v\in \mathbb F_q^d}|\widehat{A}(v)|^2= q^d |A|$,~ $|\widehat{A}(v)|\le |\widehat{A}(0)|=|A|$ to bound the sum $\sum_{v\in \mathbb F_q^d}\left|{\widehat{A}(v)}\right|^3$. 
 
Therefore, in order to prove Lemma \ref{bo6}, it is enough to address the following theorem which will be shown using $L^p\to L^4$ spherical restriction estimates. 

 \begin{theorem}
For $A\subset \mathbb{F}_q^d$ with $d$ even, the following statements hold. 
\begin{enumerate}
\item Suppose that $d=4$, then we have 
\[||\widehat{A}||_{L^3(S_t, d\sigma)}\ll |A|^{7/9} \quad \mbox{for any}~~  t\ne 0.\]
\item Suppose that $d\ge 6$, then we have  
\[||\widehat{A}||_{L^3(S_t, d\sigma)}\ll q^{-\frac{d^2-7d+6}{12(d-2)}}|A|^{1-\frac{d}{6d-12}} \quad \mbox{for any}~~  t\ne 0.\]
\end{enumerate}

 \end{theorem}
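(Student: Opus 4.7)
The plan is to apply Proposition \ref{co2}(1) to interpolate restriction estimates for $\widehat A$ on $S_t$ at $L^p$ norms on either side of $L^3$. The main input is the sharp even-dimensional extension estimate $R^*_{S_t}(\tfrac{4d}{3d-2}\to 4)\ll 1$ from \cite{pham}, which after duality and specialization to $g=\mathbf 1_A$ reads $\|\widehat A\|_{L^{4d/(d+2)}(S_t,d\sigma)}\ll|A|^{3/4}$. For (1), where $d=4$, the dual index is $4d/(d+2)=8/3>3$, so $L^3$ sits between $L^{8/3}$ and $L^\infty$. Pairing the above with the trivial bound $\|\widehat A\|_{L^\infty(S_t)}\le|A|$ and interpolating at $\theta=1/9$ (determined by $\tfrac{1}{3}=\tfrac{1-\theta}{8/3}+\tfrac{\theta}{\infty}$) gives
\[\|\widehat A\|_{L^3(S_t,d\sigma)}\ll(|A|^{3/4})^{8/9}|A|^{1/9}=|A|^{2/3+1/9}=|A|^{7/9}.\]

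For (2), the case $d=6$ is trivial because $4d/(d+2)=3$ identically, so the extension bound itself yields $\|\widehat A\|_{L^3(S_t)}\ll|A|^{3/4}$, consistent with the statement since $(d-1)(d-6)/(12(d-2))=0$ at $d=6$. For $d\ge 8$ the dual index $4d/(d+2)$ still exceeds $3$, but $L^\infty$ is no longer the right partner; I use instead an $L^2$ bound that carries a negative $q$-power. Expanding $\sum_{x\in S_t}|\widehat A(x)|^2=\sum_{a,b\in A}\widehat{1_{S_t}}(a-b)$ and using the standard decay $|\widehat{1_{S_t}}(y)|\ll q^{(d-1)/2}$ for $y\ne 0$ gives $\|\widehat A\|_{L^2(S_t,d\sigma)}^2\ll|A|+|A|^2/q^{(d-1)/2}$, which in the regime $|A|\gtrsim q^{(d-1)/2}$ relevant to Lemma \ref{bo6} simplifies to $\|\widehat A\|_{L^2(S_t)}\ll|A|/q^{(d-1)/4}$. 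Taking $\theta=\frac{2d}{3(d-2)}$, $1-\theta=\frac{d-6}{3(d-2)}$ (solving $\tfrac{1}{3}=\tfrac{1-\theta}{2}+\tfrac{(d+2)\theta}{4d}$), Proposition \ref{co2}(1) produces
\[\|\widehat A\|_{L^3(S_t,d\sigma)}\ll\left(\tfrac{|A|}{q^{(d-1)/4}}\right)^{\frac{d-6}{3(d-2)}}\left(|A|^{3/4}\right)^{\frac{2d}{3(d-2)}}=q^{-\frac{(d-1)(d-6)}{12(d-2)}}|A|^{\frac{5d-12}{6(d-2)}},\]
which matches the stated bound after recognizing $(d-1)(d-6)=d^2-7d+6$ and $\frac{5d-12}{6(d-2)}=1-\frac{d}{6d-12}$.

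The main obstacle is choosing the right companion restriction estimate in each dimensional range: the three-way split $d=4$ vs.\ $d=6$ vs.\ $d\ge 8$ is forced by whether $4d/(d+2)$ sits above, at, or below the target index $3$. For $d\ge 8$ it is crucial to use the Fourier-decay-refined $L^2$ bound rather than the cruder Plancherel bound $\|\widehat A\|_{L^2(S_t)}\ll q^{1/2}|A|^{1/2}$, since the latter contributes a positive $q$-power that would destroy the negative $q$-exponent demanded by the statement. Once the correct pairing is in hand the rest of the proof is routine interpolation bookkeeping, and the sharp extension input $R^*_{S_t}(4d/(3d-2)\to 4)\ll 1$ for arbitrary non-zero radius spheres in even dimensions is taken as a black box from \cite{pham}.
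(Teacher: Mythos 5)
Your argument is correct and essentially the paper's own proof: you use the same dual extension input $\|\widehat{A}\|_{L^{4d/(d+2)}(S_t,d\sigma)}\ll |A|^{3/4}$ from \cite{pham}, the same Fourier-decay $L^2$ bound (valid for $|A|\gtrsim q^{(d-1)/2}$, an implicit hypothesis the paper's proof also relies on and which is harmless for the application in Lemma \ref{bo6}), and the same interpolation weight $\theta=\frac{2d}{3(d-2)}$, the only difference being that for $d=4$ you dualize before interpolating rather than after, which is a cosmetic reordering. One small slip: $4d/(d+2)=8/3$ is less than $3$, not greater, but your placement of $L^3$ between $L^{8/3}$ and $L^{\infty}$ and the resulting exponent $|A|^{7/9}$ are nevertheless correct.
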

 \begin{proof}
 {\bf Case $1$:} For any function $f\colon S_t\to \mathbb{C}$, we recall the trivial bound 
 \[||(fd\sigma)^\vee||_{L^\infty(\mathbb{F}_q^4, dc)}\ll ||f||_{L^1(S_t, d\sigma)}.\]
 It was also proved in \cite[Theorem 1.5]{pham} that 
 \[||(fd\sigma)^\vee||_{L^4(\mathbb{F}_q^4, dc)}\ll ||f||_{L^{\frac{8}{5}}(S_t, d\sigma)}.\]
Using Proposition \ref{co2}, we have 
 \[||(fd\sigma)^\vee||_{L^\frac{9}{2}(\mathbb{F}_q^4, dc)}\ll ||f||_{L^{\frac{3}{2}}(S_t, d\sigma)}.\]
 By duality, one has 
 \[||\widehat{g}||_{L^3(S_t, d\sigma)}\ll ||g||_{L^{\frac{9}{7}}(\mathbb{F}_q^4, dc)},\]
 for all function $g\colon \mathbb{F}_q^4\to \mathbb{C}$. Set $g$ to be the characteristic function of $A$, then the statement follows. 
 
{\bf Case $2$:}
To prove this case, we first need to show that 
 \[||\widehat{A}||_{L^2(S_t, d\sigma)}\ll \frac{|A|}{q^{\frac{d-1}{4}}},\]
whenever $|A|\ge q^{\frac{d-1}{2}}$.
Indeed, \[||\widehat{A}||^2_{L^2(S_t, d\sigma)}=\frac{1}{|S_t|}\sum_{x\in S_t}|\widehat{A}(x)|^2\sim\frac{1}{q^{d-1}}\sum_{x\in S_t}|\widehat{A}(x)|^2.\]
Thus, it is enough to handle the following inequality
\[\sum_{x\in S_t}|\widehat{A}(x)|^2\ll q^{\frac{d-1}{2}}|A|^2.\]
It follows from the definition of $\widehat{A}(x)$ that 
\begin{align*}
\sum_{x\in S_t}|\widehat{A}(x)|^2&=\sum_{x\in S_t}\sum_{a, b\in A}\chi(-x(a-b))=\sum_{a, b\in A}\widehat{S_t}(a-b)\\
&=|A|~\widehat{S_t}(0)+\sum_{a, b\in A, a\ne b}\widehat{S_t}(a-b)\\
&\le |A||S_t|+\sum_{a, b\in A, a\ne b}\left(\max_{x\ne 0}|\widehat{S_t}(x)|\right).
\end{align*}
Moreover, it was shown in \cite{IR06} that 
\[\max_{x\ne 0}|\widehat{S_t}(x)|=\max_{x\ne 0}\left\vert \sum_{m\in S_t}\chi(-x\cdot m)\right\vert\ll q^{\frac{d-1}{2}}.\]
Thus, we obtain 
\[\sum_{x\in S_t}|\widehat{A}(x)|^2\ll q^{d-1}|A|+q^{\frac{d-1}{2}}|A|^2\ll q^{\frac{d-1}{2}}|A|^2,\]
under the condition $|A|\ge q^{\frac{d-1}{2}}$. 

Since $d$ is even, by duality, it was proved in \cite[Theorem 1.5]{pham} that 
\[||\widehat{A}||_{L^{\frac{4d}{d+2}}(S_t, d\sigma)}\ll \|A\|_{L^{\frac{4}{3}}(\mathbb F_q^d, dc)}= |A|^{3/4}. \]
Thus, if $d\ge 2$, and $|A|\ge q^{\frac{d-1}{2}}$, using Proposition \ref{co2} with $\theta=2d/(3d-6)$, we obtain
\[||\widehat{A}||_{L^3(S_t, d\sigma)}\ll \left(q^{-\frac{d-1}{4}}|A|\right)^{1-\theta}|A|^{\frac{3\theta}{4}}.\]
Notice that since $d\ge 6$, we have $0\le\theta\le 1$. Hence,
\[||\widehat{A}||_{L^3(S_t, d\sigma)}\ll q^{-\frac{d^2-7d+6}{12(d-2)}}|A|^{1-\frac{d}{6d-12}}.\]
This completes the proof of the theorem.
 \end{proof}

\paragraph{Proof of Theorem \ref{thm:main4-thang}:}
Using the Cauchy-Schwarz inequality and the first statement of Lemma \ref{loai0}, we have 
\[|\Delta_3(A)|\gg \frac{|A|^6}{\sum_{t\ne 0}\mu_3(t)^2}.\]
On the other hand, Lemma \ref{bo6} gives us 
\[\sum_{t\ne 0}\mu_3(t)^2\ll \begin{cases}&\frac{|A|^6}{q}+q^3|A|^{\frac{13}{3}}, ~d=4\\ &\frac{|A|^6}{q}+q^{\frac{3d^2-5d+2}{4d-8}}|A|^{5-\frac{d}{2d-4}}, ~d\ge 6. \end{cases}\]
In other words, 
\[|\Delta_3(A)|\gg q,\]
whenever \[|A|\gg \begin{cases} q^{\frac{12}{5}}, ~d=4\\ q^{\frac{d+1}{2}-\frac{1}{3d-4}}, ~d\ge 6,\end{cases}\] This completes the proof of the theorem. $\square$

\section*{Acknowledgements}

The authors gratefully thank to the refree for the valuable comments which helped us improve the quality of our manuscript. 

D. Koh was supported by Basic Science Research Program through the National
Research Foundation of Korea(NRF) funded by the Ministry of Education, Science
and Technology (NRF-2018R1D1A1B07044469). T. Pham was supported by Swiss National Science Foundation grants P400P2--183916 and P4P4P2-191067.

The authors would like to thank to the VIASM for the hospitality and for the excellent working condition.
 \bibliographystyle{amsplain}

\end{document}